\numberwithin{equation}{section}
\newtheorem{thm}{Theorem}[section]
\newtheorem{pro}[thm]{Proposition}
\newtheorem{oss}[thm]{Remark}
\newtheorem{cor}[thm]{Corollary}
\begin{document}

\date{\today}

\title[Isolated singularities for the $n-$Liouville equation]{Isolated singularities for the $n-$Liouville equation}

\author{Pierpaolo Esposito}
\address{Pierpaolo Esposito, Dipartimento di Matematica e Fisica, Universit\`a degli Studi Roma Tre',\,Largo S. Leonardo Murialdo 1, 00146 Roma, Italy}
\email{esposito@mat.uniroma3.it}

\thanks{Partially supported by Gruppo Nazionale per l'Analisi Matematica, la Probabilit\'a  e le loro Applicazioni (GNAMPA) of the Istituto Nazionale di Alta Matematica (INdAM)}

\begin{abstract} In dimension $n$ isolated singularities -- at a finite point or at infinity-- for solutions of finite total mass to the $n-$Liouville equation are of logarithmic type. As a consequence, we simplify the classification argument in \cite{Esp} and establish a quantization result for entire solutions of the singular $n-$Liouville equation.
\end{abstract}

\maketitle

\section{Introduction}
\noindent The behavior near an isolated singularity has been discussed by Serrin in \cite{Ser1,Ser2} for a very general class of second-order quasi-linear equations. The simplest example is given by the prototypical equation $-\Delta_n u=f$, where $\Delta_n(\cdot) =\hbox{div} (|\nabla (\cdot) |^{n-2}\nabla (\cdot) )$, $n \geq 2$, is the $n-$Laplace operator. In dimension $n$, the case $f \in L^1$ is very delicate as it represents a limiting situation where Serrin's results do not apply. We will be interested in the $n-$Liouville equation, where $f$ is taken as an exponential function of $u$ according to Liouville's seminal paper \cite{Lio}, and the singularity might be at a finite point or at infinity.

\medskip \noindent To this aim, it is enough to consider the generalized $n-$Liouville equation
\begin{equation}\label{E1}
-\Delta_n u=|x|^{n \alpha} e^u \mbox{ in } \Omega \setminus \{0\}, \: \int_{\Omega} |x|^{n \alpha} e^u <+\infty
\end{equation}
on an open set $\Omega \subset \mathbb{R}^n$ with $0 \in \Omega$, and we will be concerned with describing the behavior of $u$ at $0$. A solution $u$ of \eqref{E1} stands for a function $u \in C^{1,\eta}_{loc}(\Omega \setminus \{0\})$ which satisfies
$$\int_{\Omega} |\nabla u|^{n-2}\langle \nabla u, \nabla \varphi\rangle=\int_{\Omega} |x|^{n \alpha} e^u \varphi
\qquad \forall \ \varphi \in  C_0^1(\Omega \setminus \{0\}). $$
The regularity assumption on $u$ is not restrictive since a solution in $W^{1,n}_{\hbox{loc}}(\Omega \setminus \{0\})$ is automatically in $C^{1,\eta}_{loc}(\Omega \setminus \{0\})$, for some $\eta \in (0,1)$, thanks to \cite{Dib,Ser1,Tol}, see Theorem 2.3 in \cite{Esp}.

\medskip \noindent Concerning the behavior near an isolated singularity, our main result is
\begin{thm} \label{thm1} Let $u$ be a solution of \eqref{E1}. Then there exists $ \gamma >- n^n |\alpha+1|^{n-2}(\alpha+1) \omega_n $, $\omega_n=|B_1(0)|$, so that
\begin{equation}\label{E1bis}
-\Delta_n u=|x|^{n \alpha} e^u -\gamma \delta_0 \hbox{ in } \Omega 
\end{equation}
with
\begin{equation} \label{sing1}
u-\gamma  (n\omega_n |\gamma|^{n-2})^{-\frac{1}{n-1}} \log |x|\in L^\infty_{\hbox{loc}} (\Omega)
\end{equation}
and
\begin{equation} \label{gamma1}
\lim_{x \to 0} \left[|x|\nabla u(x)-\gamma  (n\omega_n |\gamma|^{n-2})^{-\frac{1}{n-1}} \frac{x}{|x|} \right] =0.
\end{equation}
\end{thm}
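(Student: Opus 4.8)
The plan is to first establish a priori bounds near the origin, then realize the equation as a measure equation on all of $\Omega$ (which identifies the constant $\gamma$), and finally pin down the precise profile by a blow-up around $0$. For the a priori bounds: since $|x|^{n\alpha}e^u\in L^1(\Omega)$, after shrinking $\Omega$ we may assume $\int_\Omega|x|^{n\alpha}e^u$ is below the Moser--Trudinger threshold, so a Brezis--Merle type estimate for the $n$-Laplacian (in the spirit of the regularity theory recalled in Theorem 2.3 of \cite{Esp}) upgrades the right-hand side to $|x|^{n\alpha}e^u\in L^p_{\mathrm{loc}}(\Omega)$ for some $p>1$. Rescaling \eqref{E1} on the dyadic annuli $B_{2r}(0)\setminus B_{r/2}(0)$ and combining a Caccioppoli inequality with the interior $C^{1,\eta}$ estimates of DiBenedetto and the (weak) Harnack inequality for the $n$-Laplacian, one obtains $|\nabla u(x)|\le C|x|^{-1}$ and $\mathrm{osc}_{\partial B_r(0)}u\le C$ for small $|x|$ and $r$; integrating the radial derivative along rays then gives $|u(x)|\le C\,(1+\log\tfrac1{|x|})$. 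In particular $|\nabla u|^{n-1}\in L^1_{\mathrm{loc}}(\Omega)$.

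Consequently $-\Delta_n u$ is a well-defined distribution on $\Omega$, and it coincides with $|x|^{n\alpha}e^u$ on $\Omega\setminus\{0\}$, so $T:=-\Delta_n u-|x|^{n\alpha}e^u$ is a distribution supported at $\{0\}$, hence a finite combination of derivatives of $\delta_0$. Testing $T$ against $\psi(\cdot/\lambda)$ and using $\int_{B_\lambda}|\nabla u|^{n-1}\le C\lambda$ together with $\int_{B_\lambda}|x|^{n\alpha}e^u\to0$ shows that $\langle T,\psi(\cdot/\lambda)\rangle$ stays bounded as $\lambda\to0$, which forces $T$ to have order zero, i.e. $T=-\gamma\,\delta_0$ for some $\gamma\in\mathbb R$; this is \eqref{E1bis}. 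Integrating \eqref{E1bis} over $B_r(0)$ yields $\int_{\partial B_r(0)}|\nabla u|^{n-2}\partial_\nu u=\gamma-\int_{B_r(0)}|x|^{n\alpha}e^u$, so this flux converges to $\gamma$ as $r\to0$.

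For the profile, set $v_\rho(y):=u(\rho y)-\bar u(\rho)$, where $\bar u(\rho)$ is the average of $u$ on $\partial B_\rho(0)$. By the bounds above $\{v_\rho\}$ is bounded in $C^{1,\eta}_{\mathrm{loc}}(\mathbb R^n\setminus\{0\})$ with $|v_\rho(y)|\le C(1+|\log|y||)$, and $-\Delta_n v_\rho=g_\rho$ with $g_\rho(y)=\rho^{n(\alpha+1)}e^{\bar u(\rho)}|y|^{n\alpha}e^{v_\rho(y)}$; the change of variables $x=\rho y$ gives $\int_K g_\rho=\int_{\rho K}|x|^{n\alpha}e^u\to0$ for every compact $K\subset\mathbb R^n\setminus\{0\}$ --- the subtraction of $\bar u(\rho)$ is exactly what makes the weight and the exponential scale out. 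Passing to the limit along $\rho\to0$, $v_\ast$ is $n$-harmonic in $\mathbb R^n\setminus\{0\}$, has at most logarithmic growth at $0$ and at $\infty$, has vanishing average on $\partial B_1(0)$, and satisfies $\int_{\partial B_1(0)}|\nabla v_\ast|^{n-2}\partial_\nu v_\ast=\gamma$. By the classification of isolated singularities of $n$-harmonic functions (Kichenassamy--V\'eron), applied at $0$ and, through the inversion $y\mapsto y/|y|^2$, at $\infty$, together with a Liouville theorem, necessarily $v_\ast(y)=k\log|y|$ with $n\omega_n|k|^{n-2}k=\gamma$, i.e. $k=\gamma(n\omega_n|\gamma|^{n-2})^{-1/(n-1)}$; since this limit is unique the whole family converges, $v_\rho\to k\log|\cdot|$ in $C^1_{\mathrm{loc}}(\mathbb R^n\setminus\{0\})$. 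Evaluating at $y=x/|x|$ gives $|x|\nabla u(x)-k\,x/|x|\to0$, which is \eqref{gamma1}, and likewise $u(x)-\bar u(|x|)\to0$.

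It remains, for \eqref{sing1}, to check that $\bar u(r)-k\log r$ stays bounded. For this I would upgrade the previous step to a convergence rate: using $|u|\le C(1+\log\tfrac1{|x|})$ and the leading behavior just found, $\|g_\rho\|_{L^\infty(B_2\setminus B_{1/2})}=O(\rho^{\delta_0})$ for some $\delta_0>0$, where the hypothesis $\int_\Omega|x|^{n\alpha}e^u<+\infty$ is precisely what guarantees $\delta_0>0$, equivalently the asserted lower bound $\gamma>-n^n|\alpha+1|^{n-2}(\alpha+1)\omega_n$. A quantitative stability estimate for $\Delta_n$ about the non-degenerate model $k\log|y|$ (valid when $k\ne0$) then gives $v_\rho=k\log|\cdot|+O(\rho^{\delta})$ in $C^1$ of an annulus, hence $|x|\nabla u(x)=k\,x/|x|+O(|x|^{\delta})$, so $r\bar u'(r)-k=O(r^{\delta})$ and $\bar u(r)-k\log r$ converges, which gives \eqref{sing1}. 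The borderline case $\gamma=0$ is handled separately: then \eqref{E1bis} reads $-\Delta_n u=|x|^{n\alpha}e^u\in L^p_{\mathrm{loc}}(\Omega)$ across $0$, and the standard regularity theory for the $n$-Laplacian yields $u\in L^\infty_{\mathrm{loc}}(\Omega)$ and $|x|\nabla u\to0$. The heart of the argument --- and its most delicate point --- is this last stage: ruling out a non-radial $n$-harmonic blow-up limit and, above all, producing the \emph{rate} of convergence, which requires a quantitative perturbation estimate for the degenerate operator $\Delta_n$ and becomes delicate exactly near $\gamma=0$; verifying that finite total mass is equivalent to the stated bound on $\gamma$ is the other point requiring care at the threshold.
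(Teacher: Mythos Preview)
Your route differs from the paper's in a way that matters. The paper establishes \eqref{sing1} first and then derives \eqref{gamma1} by an easy blow-up; you attempt the reverse order. The paper's argument for \eqref{sing1} is the substance: after showing $u\in\bigcap_{q<n}W^{1,q}$ and identifying $\gamma$, it decomposes $u=u_0+h$ on a small ball $B$, where $h$ solves $\Delta_n h=\gamma\delta_0$, $h=u$ on $\partial B$, so that $h-k\log|x|\in L^\infty$ by \cite{KiVe} with $k=\gamma(n\omega_n|\gamma|^{n-2})^{-1/(n-1)}$. It then proves $u_0\in L^\infty(B)$ via truncation estimates and a Moser iteration; since $u_0$ solves the \emph{nonlinear} difference equation $-\Delta_n(u_0+h)+\Delta_n h=|x|^{n\alpha}e^u$, these inequalities do not come for free, and the paper obtains them by realizing $u$ and $h$ as SOLA and invoking the uniqueness theorem of \cite{DHM}. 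The algebraic key is that for a \emph{pure} logarithm $h$ the condition $|x|^{n\alpha}e^h\in L^1$ self-improves to $L^p$ for some $p>1$; combined with $e^{u_0}\in L^q$ for all $q$ (an Aguilar--Peral consequence of the truncation estimate), this gives $|x|^{n\alpha}e^u\in L^p$ and feeds the Moser step. The strict lower bound on $\gamma$ also comes from this comparison: $u_0\ge 0$ yields $u\ge h$, hence $|x|^{n\alpha+k}\in L^1$ near $0$, hence $n\alpha+k>-n$ strictly.

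Your sketch has two gaps. The first is the opening claim that a Br\'ezis--Merle argument yields $|x|^{n\alpha}e^u\in L^p_{\mathrm{loc}}(\Omega)$ for some $p>1$: the equation is only known on $\Omega\setminus\{0\}$, so Aguilar--Peral applies only on annuli and controls $u$ minus its $n$-harmonic replacement there, with no uniform control of the replacement as the annulus shrinks; this $L^p$ improvement is the \emph{output} of the paper's decomposition, not an input. The second and decisive gap is the passage from the blow-up limit to \eqref{sing1}. On the unit annulus $\|g_\rho\|_{L^\infty}\asymp\rho^{n(\alpha+1)}e^{\bar u(\rho)}$; writing $\bar u(\rho)=k\log\rho+R(\rho)$ with $R(\rho)=o(\log\rho)$ (which is all the blow-up gives), one has $\|g_\rho\|_{L^\infty}\asymp\rho^{n(\alpha+1)+k}e^{R(\rho)}$. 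A power rate $O(\rho^{\delta_0})$ then requires the \emph{strict} inequality $n(\alpha+1)+k>0$, and this is precisely what you cannot extract at this stage: from finite mass together with the oscillation bound one only gets summability of $\rho_j^{n(\alpha+1)+k}e^{R(\rho_j)}$ along dyadic scales, which is compatible with the borderline $n(\alpha+1)+k=0$ and, say, $R(\rho)=-\sqrt{\log(1/\rho)}$. In the paper the strictness follows from the pointwise inequality $u\ge h$ with $h$ an \emph{exact} logarithm (so $|x|^{n\alpha+k}$ must be integrable, forcing the strict exponent bound), and that inequality is a byproduct of the $u_0\ge 0$ part of the SOLA comparison --- exactly the machinery your approach tries to bypass. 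Without it your ``quantitative stability'' step has nothing to start from, and \eqref{gamma1} alone does not imply \eqref{sing1}.
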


\medskip \noindent The case $\alpha=-2$ is relevant for the asymptotic behavior at infinity for solutions $u$ of
\begin{equation}\label{E1entire}
-\Delta_n u=e^u \mbox{ in } \Omega  , \: \int_\Omega e^u<+\infty,
\end{equation}
where $\Omega$ is an unbounded open set so that $B_R(0)^c \subset \Omega$ for some $R>0$. Indeed, let us recall that $\Delta_n$ is invariant under Kelvin transform: if $u$ solves \eqref{E1}, then $\hat u(x)=u(\frac{x}{|x|^2})$ does satisfy
\begin{equation} \label{Kelvin}
-\Delta_n \hat u=|x|^{-2n} (-\Delta_n u) (\frac{x}{|x|^2})=|x|^{-n(\alpha+2)} e^{\hat u} \hbox{ in }\hat \Omega=\{x \not=0 :\ \frac{x}{|x|^2} \in \Omega\}.
\end{equation}
By Theorem \ref{thm1} applied with $\alpha=-2$ to $\hat u$ at $0$ we find:
\begin{cor} \label{cor1} Let $u$ be a solution of \eqref{E1entire} on an unbounded open set $\Omega$ with $B_R(0)^c \subset \Omega$ for some $R>0$. Then there holds
\begin{equation} \label{1455}
u=-\left(\frac{\gamma_\infty}{n\omega_n} \right)^{\frac{1}{n-1}} \log |x|+O(1)
\end{equation}
as $|x| \to \infty$ for some $ \gamma_\infty >n^n \omega_n $. In particular, when $\Omega=\mathbb{R}^n$ there holds
\begin{equation} \label{145}
u= - \left(\frac{1}{n \omega_n}\int_{\mathbb{R}^n} e^u \right)^{\frac{1}{n-1}}\log |x|+O(1)
\end{equation}
as $|x| \to \infty$.
\end{cor}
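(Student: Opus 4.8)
The plan is to deduce the corollary from Theorem~\ref{thm1} through the Kelvin transform recalled in \eqref{Kelvin}. Let $u$ solve \eqref{E1entire} on an unbounded $\Omega$ with $B_R(0)^c\subset\Omega$ and set $\hat u(x)=u(x/|x|^2)$; since \eqref{E1entire} is the case $\alpha=0$ of \eqref{E1}, formula \eqref{Kelvin} yields $-\Delta_n\hat u=|x|^{-2n}e^{\hat u}$ on $\hat\Omega=\{x\neq 0:\ x/|x|^2\in\Omega\}$, a set which contains the punctured ball $B_{1/R}(0)\setminus\{0\}$. The Kelvin transform preserves $C^{1,\eta}_{\mathrm{loc}}$ regularity away from the origin, and the change of variables $y=x/|x|^2$ (Jacobian $|x|^{-2n}$) gives $\int_{\hat\Omega}|x|^{-2n}e^{\hat u}\,dx=\int_\Omega e^u\,dy<+\infty$. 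Thus $\hat u$ is a solution of \eqref{E1} with $\alpha=-2$ near $0$ — on $B_{1/R}(0)$ — and Theorem~\ref{thm1} applies.

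Reading off Theorem~\ref{thm1} with $\alpha=-2$, for which $-n^n|\alpha+1|^{n-2}(\alpha+1)\omega_n=n^n\omega_n$, I obtain $\gamma>n^n\omega_n$ (in particular $\gamma>0$) with $\hat u-\gamma(n\omega_n|\gamma|^{n-2})^{-1/(n-1)}\log|x|\in L^\infty_{\mathrm{loc}}$ near $0$. Since $\gamma>0$, one has $\gamma(n\omega_n|\gamma|^{n-2})^{-1/(n-1)}=(\gamma/(n\omega_n))^{1/(n-1)}$, hence $\hat u(x)=(\gamma/(n\omega_n))^{1/(n-1)}\log|x|+O(1)$ as $x\to 0$. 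Because the Kelvin transform is an involution, $u(y)=\hat u(y/|y|^2)$; writing $x=y/|y|^2$, so that $|x|=1/|y|$ and $x\to 0$ as $|y|\to\infty$, this becomes $u(y)=-(\gamma/(n\omega_n))^{1/(n-1)}\log|y|+O(1)$ as $|y|\to\infty$, which is \eqref{1455} with $\gamma_\infty:=\gamma>n^n\omega_n$.

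It remains to identify $\gamma_\infty$ with $\int_{\mathbb{R}^n}e^u$ when $\Omega=\mathbb{R}^n$, in which case $\hat\Omega=\mathbb{R}^n\setminus\{0\}$ and \eqref{E1bis} (with $\alpha=-2$) reads $-\Delta_n\hat u=|x|^{-2n}e^{\hat u}-\gamma\delta_0$ in $\mathbb{R}^n$. Since $0\in\Omega$ and $u\in C^{1,\eta}_{\mathrm{loc}}(\mathbb{R}^n)$, both $u$ and $\nabla u$ are bounded near $0$, so the chain rule gives $\hat u(x)=O(1)$ and $\nabla\hat u(x)=O(|x|^{-2})$ as $|x|\to\infty$, whence $|\nabla\hat u|^{n-2}\nabla\hat u=O(|x|^{-2(n-1)})$. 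I would then test \eqref{E1bis} against a cutoff $\varphi_\rho\in C_0^1(\mathbb{R}^n)$ with $\varphi_\rho\equiv 1$ on $B_\rho(0)$, $\mathrm{supp}\,\varphi_\rho\subset B_{2\rho}(0)$ and $|\nabla\varphi_\rho|\leq C/\rho$, to get
\begin{equation*}
\int_{\mathbb{R}^n}|\nabla\hat u|^{n-2}\langle\nabla\hat u,\nabla\varphi_\rho\rangle=\int_{\mathbb{R}^n}|x|^{-2n}e^{\hat u}\varphi_\rho-\gamma .
\end{equation*}
The left-hand side is bounded by $C\rho^n\cdot\rho^{-2(n-1)}\cdot\rho^{-1}=C\rho^{1-n}\to 0$, while by monotone convergence $\int_{\mathbb{R}^n}|x|^{-2n}e^{\hat u}\varphi_\rho\to\int_{\mathbb{R}^n}|x|^{-2n}e^{\hat u}=\int_{\mathbb{R}^n}e^u$. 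Letting $\rho\to\infty$ yields $\gamma=\int_{\mathbb{R}^n}e^u$, which is \eqref{145}.

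Once Theorem~\ref{thm1} is in hand the argument is essentially bookkeeping: checking that the Kelvin transform sends admissible solutions to admissible solutions with the same total mass, and the elementary rewriting of the coefficient in \eqref{sing1}. The only slightly delicate point I anticipate is the decay $\nabla\hat u(x)=O(|x|^{-2})$ at infinity needed to annihilate the cutoff term in the integral identity for the $\Omega=\mathbb{R}^n$ case; this is exactly where the interior regularity of $u$ at the origin — available precisely because $\Omega=\mathbb{R}^n$ — enters.
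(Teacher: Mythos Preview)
Your argument is correct and matches the paper's approach. The first part---applying Theorem~\ref{thm1} with $\alpha=-2$ to the Kelvin transform $\hat u$ and reading \eqref{sing1} back as \eqref{1455}---is exactly how the paper derives the corollary (see the sentence preceding its statement). For the identification $\gamma_\infty=\int_{\mathbb{R}^n}e^u$ when $\Omega=\mathbb{R}^n$, the paper simply says this ``follows by integrating \eqref{E1bis} written for $\hat u$ on $\mathbb{R}^n$''; your cutoff computation, with the decay $|\nabla\hat u|=O(|x|^{-2})$ coming from the interior regularity of $u$ at the origin, is precisely the justification needed to make that one-line remark rigorous. The paper later offers an alternative route in a Remark after Theorem~\ref{thm2}, deducing \eqref{145} from \eqref{quant} with $\gamma=0$, but your direct argument is the one the paper sketches first and requires no Pohozaev machinery.
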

\noindent When $n=2$ the asymptotic expansion \eqref{145} is a well known property established in \cite{ChLi} by means of the Green representation formula -- unfortunately not available in the quasi-linear case-- and of the growth properties of entire harmonic functions. Notice that
$$\gamma_\infty=\int_{\mathbb{R}^n} |x|^{-2n} e^{\hat u} =\int_{\mathbb{R}^n} e^u$$
follows by integrating \eqref{E1bis} written for $\hat u$ on $\mathbb{R}^n$. Property \eqref{145} has been already proved in \cite{Esp} under the assumption $\gamma_\infty>n^n \omega_n$ and the present full generality allows to simplify the classification argument in \cite{Esp}: a Pohozaev identity leads for $\gamma_\infty$ to the quantization property 
\begin{equation}\label{E16ter}
\int_{\mathbb{R}^n} e^u=n (\frac{n^{2}}{n-1})^{n-1} \omega_n
\end{equation}
and an isoperimetric argument concludes the classification result thanks to \eqref{E16ter}. 

\medskip \noindent In the punctured plane $\Omega =\mathbb{R}^n \setminus \{0\}$ the isoperimetric argument fails and in general the classification result is no longer available. The two-dimensional case $n=2$ has been treated via complex analysis in \cite{ChWa,PrTa}: solutions $u$ to
\begin{equation}\label{E1bisentiren=2}
-\Delta u=e^u -\gamma \delta_0\mbox{ in } \mathbb{R}^2,\: \int_{\mathbb{R}^2} e^u<+\infty,
\end{equation}
have been classified for $\gamma>-4\pi$ of the form
$$u(x)=\log\frac{8 (\alpha+1)^2 \lambda^2 |x|^{2\alpha }}{(1+\lambda^2 |x^{\alpha+1}+c|^2)^2},\: \alpha=\frac{\gamma}{4\pi},$$
with $\lambda>0$ and a complex number $c =0$ if $\alpha \notin \mathbb{N}$, and in particular satisfy
\begin{equation} \label{1059}
\int_{\mathbb{R}^n} e^u=8\pi(\alpha+1).
\end{equation}
The structure of entire solutions $u$ to \eqref{E1bisentiren=2} changes drastically passing from radial solutions when $\alpha \notin \mathbb{N}$ to non-radial solutions when $\alpha \in \mathbb{N}$ (and $c \not=0$). Unfortunately a PDE approach is not available for $n=2$ and a classification result is completely out of reach when $n \geq 3$. However, quantization properties are still in order as it follows by Theorem \ref{thm1} and the Pohozaev identities:

\begin{thm} \label{thm2} Let $u$ be a solution of 
\begin{equation}\label{E1bisentire}
-\Delta_n u=e^u -\gamma \delta_0\mbox{ in } \mathbb{R}^n,\: \int_{\mathbb{R}^n} e^u<+\infty.
\end{equation}
Then $\gamma>-n^n \omega_n$ and
\begin{equation}\label{quant}
\int_{\mathbb{R}^n} e^u=\gamma +\gamma_\infty
\end{equation}
with $\gamma_\infty$ the unique solution in $(n^n \omega_n,+\infty)$ of
\begin{equation} \label{921}
\frac{n-1}{n} (n\omega_n)^{-\frac{1}{n-1}}   \gamma_\infty^{\frac{n}{n-1}} -n  \gamma_\infty=n\gamma+
\frac{n-1}{n} (n\omega_n)^{-\frac{1}{n-1}}  |\gamma|^{\frac{n}{n-1}} .
\end{equation}
\end{thm}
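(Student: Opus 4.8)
The plan is to read off the asymptotic behaviour of $u$ at the origin and at infinity from Theorem~\ref{thm1} and Corollary~\ref{cor1}, to obtain \eqref{quant} by integrating \eqref{E1bisentire}, and to obtain \eqref{921} from the Pohozaev identity. Since $u$ solves $-\Delta_n u=e^u$ in $\mathbb{R}^n\setminus\{0\}$ with finite total mass, Theorem~\ref{thm1} applied with $\alpha=0$ at the origin gives at once $\gamma>-n^n\omega_n$, the expansion $u=\beta_0\log|x|+O(1)$ near $0$ with $\beta_0:=\gamma(n\omega_n|\gamma|^{n-2})^{-\frac{1}{n-1}}$, and the gradient control $|x|\nabla u(x)-\beta_0\frac{x}{|x|}\to0$ as $x\to0$. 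Applying Theorem~\ref{thm1} with $\alpha=-2$ to the Kelvin transform $\hat u$ at the origin, equivalently invoking Corollary~\ref{cor1} and transforming \eqref{gamma1} back, produces a constant $\gamma_\infty>n^n\omega_n$, the expansion $u=-\beta_\infty\log|x|+O(1)$ as $|x|\to\infty$ with $\beta_\infty:=(\gamma_\infty/(n\omega_n))^{\frac{1}{n-1}}$, and the matching estimate $|x|\nabla u(x)+\beta_\infty\frac{x}{|x|}\to0$ as $|x|\to\infty$. The bounds on $\gamma$ and $\gamma_\infty$ translate into $\beta_0>-n$ and $\beta_\infty>n$, precisely the exponent conditions under which the scale-invariant boundary integrals below converge.

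Identity \eqref{quant} follows by integrating \eqref{E1bisentire}. Exhausting $\mathbb{R}^n$ by balls and using the divergence theorem together with the $C^1$-regularity of $u$ away from $0$, the distributional equation yields $-\int_{\partial B_R}|\nabla u|^{n-2}\partial_r u=\int_{B_R}e^u-\gamma$ for every $R>0$, where $\partial_r u=\frac{x}{|x|}\cdot\nabla u$. Letting $R\to\infty$ and using the gradient asymptotics at infinity, the left-hand side tends to $n\omega_n\beta_\infty^{n-1}=\gamma_\infty$ and the right-hand side to $\int_{\mathbb{R}^n}e^u-\gamma$, which is \eqref{quant}. (The same computation on $\partial B_\epsilon$ as $\epsilon\to0$ re-proves $n\omega_n|\beta_0|^{n-2}\beta_0=\gamma$, consistent with the expression for $\beta_0$.)

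For \eqref{921} I would run the Pohozaev identity on the annulus $B_R\setminus B_\epsilon$: multiplying $-\Delta_n u=e^u$ by $x\cdot\nabla u$, integrating by parts, and using the elementary identities
\[
|\nabla u|^{n-2}\langle\nabla u,\nabla(x\cdot\nabla u)\rangle=|\nabla u|^n+\tfrac1n\,x\cdot\nabla|\nabla u|^n,\qquad e^u\,(x\cdot\nabla u)=x\cdot\nabla e^u,
\]
the interior integrals telescope and one is left with $-n\int_{B_R\setminus B_\epsilon}e^u$ and boundary integrals over $\partial B_R$ and $\partial B_\epsilon$ of $|x|\,e^u$, $|x|\,|\nabla u|^n$ and $|x|\,|\nabla u|^{n-2}(\partial_r u)^2$. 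Evaluating these with the two expansions -- on $\partial B_R$ the $e^u$-term vanishes since $\beta_\infty>n$ while the other two contribute $\omega_n\beta_\infty^n$ and $n\omega_n\beta_\infty^n$; on $\partial B_\epsilon$ the $e^u$-term vanishes since $\beta_0>-n$ while the other two contribute $\omega_n|\beta_0|^n$ and $n\omega_n|\beta_0|^n$, the asymptotic radiality $|\nabla u|^2\sim(\partial_r u)^2$ at both ends being guaranteed by the gradient estimates -- one obtains in the limit $R\to\infty$, $\epsilon\to0$ the relation $n\int_{\mathbb{R}^n}e^u=(n-1)\,\omega_n\,(\beta_\infty^n-|\beta_0|^n)$. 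Substituting $\beta_\infty^n=(n\omega_n)^{-\frac{n}{n-1}}\gamma_\infty^{\frac{n}{n-1}}$, $|\beta_0|^n=(n\omega_n)^{-\frac{n}{n-1}}|\gamma|^{\frac{n}{n-1}}$ and replacing $\int_{\mathbb{R}^n}e^u$ by $\gamma+\gamma_\infty$ via \eqref{quant}, this rearranges exactly into \eqref{921}. Uniqueness is then immediate: the map $t\mapsto\frac{n-1}{n}(n\omega_n)^{-\frac{1}{n-1}}t^{\frac{n}{n-1}}-nt$ has derivative $(n\omega_n)^{-\frac{1}{n-1}}t^{\frac{1}{n-1}}-n$, which vanishes only at $t=n^n\omega_n$, hence the map is strictly increasing on $(n^n\omega_n,+\infty)$ and \eqref{921} has there the single root $\gamma_\infty$. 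The hard part will be the rigorous treatment of the boundary terms in the Pohozaev identity: the bare logarithmic asymptotics \eqref{sing1} and \eqref{1455} do not suffice, one genuinely needs the $C^1$-type control \eqref{gamma1} (and its Kelvin counterpart at infinity) to know that $|\nabla u(x)|$ is of exact order $1/|x|$ with $\nabla u$ asymptotically radial, and one must check that the $O(1)$ corrections do not contribute to the scale-invariant boundary integrals; the distributional integrations involving $\delta_0$ and the flux at infinity are then handled routinely through the annular exhaustion.
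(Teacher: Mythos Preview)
Your proposal is correct and follows essentially the same route as the paper: apply Theorem~\ref{thm1} at the origin and, via the Kelvin transform, at infinity to obtain the asymptotics \eqref{sing1}--\eqref{gamma1} with $\gamma>-n^n\omega_n$ and $\gamma_\infty>n^n\omega_n$; integrate the equation to get \eqref{quant}; run the Pohozaev identity on annuli and pass to the limit using the gradient control \eqref{gamma1} to obtain \eqref{615} with $\alpha=0$, which rearranges into \eqref{921}; and conclude uniqueness from the strict monotonicity of $t\mapsto\frac{n-1}{n}(n\omega_n)^{-\frac{1}{n-1}}t^{\frac{n}{n-1}}-nt$ on $(n^n\omega_n,+\infty)$. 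The paper packages the Pohozaev step into a separate Proposition~\ref{Poho} and, regarding your ``hard part'', simply invokes \cite{DFSV} for the validity of the Pohozaev identity at the $C^{1,\eta}$ level, after which the boundary limits follow directly from \eqref{gamma1} as you indicate.
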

\noindent When $n=2$ notice that for $\gamma> -4\pi$ the unique solution $\gamma_\infty>4\pi$ of \eqref{921} is given explicitly as $\gamma_\infty=\gamma+8\pi$ and then $\int_{\mathbb{R}^n} e^u=2\gamma +8\pi=8\pi(\alpha+1)$ in accordance with \eqref{1059}. To have Theorem \ref{thm2} meaningful, in Section 4 we will show the existence of a family of radial solutions $u$ to \eqref{E1bisentire} but we don't know whether other solutions might exist or not, depending on the value of $\gamma$. Notice that \eqref{E1bisentiren=2} is equivalent to
$$-\Delta v=|x|^{2\alpha} e^v \mbox{ in } \mathbb{R}^2,\: \int_{\mathbb{R}^2} |x|^{2\alpha} e^v<+\infty,$$
in terms of $v=u-2\alpha \log |x|$. For $n\geq 3$ such equivalence breaks down and the problem
\begin{equation}\label{E1sim}
-\Delta_n v=|x|^{n \alpha} e^v  \mbox{ in } \mathbb{R}^n,\: \int_{\mathbb{R}^n} |x|^{n \alpha} e^v<+\infty,
\end{equation}
has its own interest, independently of \eqref{E1bisentire}. As in Theorem \ref{thm2}, for \eqref{E1sim} we have the following quantization result:
\begin{thm} \label{thm3} Let $v$ be a solution of  \eqref{E1sim}. Then $\alpha > -1$ and
$$\int_{\mathbb{R}^n} |x|^{n \alpha}e^v=n (\frac{n^{2}}{n-1})^{n-1} (\alpha+1)^{n-1} \omega_n.$$
\end{thm}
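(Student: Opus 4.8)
The plan is to locate the logarithmic behaviour of $v$ at the origin and at infinity by means of Theorem \ref{thm1}, and then to extract the total mass from the divergence theorem together with a Pohozaev identity, in the same spirit as the quantization \eqref{E16ter}. First I would apply Theorem \ref{thm1} to $v$ with $\Omega=\mathbb{R}^n$, treating $0$ as a possible singular point: it yields $\gamma_0>-n^n|\alpha+1|^{n-2}(\alpha+1)\omega_n$ together with equation \eqref{E1bis} for $v$, namely $-\Delta_n v=|x|^{n\alpha}e^v-\gamma_0\delta_0$ in $\mathbb{R}^n$. Comparing this with \eqref{E1sim}, which by hypothesis holds in the whole $\mathbb{R}^n$, forces $\gamma_0=0$; the lower bound then reads $n^n|\alpha+1|^{n-2}(\alpha+1)\omega_n>0$, i.e. $\alpha>-1$. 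Moreover \eqref{sing1}--\eqref{gamma1} with $\gamma_0=0$ say that $v$ is bounded near $0$ and $|x|\nabla v(x)\to0$ as $x\to0$, which is exactly what makes the origin contribute nothing to the integral identities below (in particular $\int_{B_\rho}|x|^{n\alpha}e^v\to0$ and $\int_{\partial B_\rho}|\nabla v|^{n-2}\partial_r v\to0$ as $\rho\to0$).

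Next I would analyse $v$ at infinity through the Kelvin transform, as in Corollary \ref{cor1}. By \eqref{Kelvin} the function $\hat v(x)=v(x/|x|^2)$ solves $-\Delta_n\hat v=|x|^{-n(\alpha+2)}e^{\hat v}$ on $\mathbb{R}^n\setminus\{0\}$, and the change of variables $y=x/|x|^2$ gives $\int_{\mathbb{R}^n}|x|^{-n(\alpha+2)}e^{\hat v}=\int_{\mathbb{R}^n}|x|^{n\alpha}e^v<+\infty$, so $\hat v$ is a solution of \eqref{E1} with parameter $\hat\alpha=-(\alpha+2)$. Since $\hat\alpha+1=-(\alpha+1)$, Theorem \ref{thm1} applied to $\hat v$ at $0$ produces $\gamma_\infty>n^n|\alpha+1|^{n-2}(\alpha+1)\omega_n>0$ --- here $\alpha>-1$ enters --- and, writing $\beta$ for the coefficient $\gamma_\infty(n\omega_n|\gamma_\infty|^{n-2})^{-\frac1{n-1}}>0$ of \eqref{sing1} (so that $n\omega_n\beta^{n-1}=\gamma_\infty$), the expansions $\hat v(x)=\beta\log|x|+O(1)$ and $|x|\nabla\hat v(x)\to\beta\,x/|x|$ as $x\to0$. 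Undoing the inversion --- whose differential is $|x|^{-2}$ times a reflection that reverses the radial direction --- these turn into
$$v(x)=-\beta\log|x|+O(1),\qquad |x|\nabla v(x)+\beta\,\frac{x}{|x|}\longrightarrow0\quad\hbox{as }|x|\to\infty,$$
whence $|x|^{n\alpha}e^v\asymp|x|^{n\alpha-\beta}$ near infinity and, by finiteness of the mass, $n\alpha-\beta<-n$.

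Integrating \eqref{E1sim} over $B_R=B_R(0)$ and letting $R\to\infty$, the absence of an inner contribution and the gradient decay give
$$\int_{\mathbb{R}^n}|x|^{n\alpha}e^v=\lim_{R\to\infty}\Bigl(-\int_{\partial B_R}|\nabla v|^{n-2}\partial_r v\Bigr)=n\omega_n\beta^{n-1}=\gamma_\infty .$$
On the other hand, the Pohozaev identity for \eqref{E1sim} --- multiply the equation by $x\cdot\nabla v$, integrate over $B_R$, and use $\hbox{div}(|x|^{n\alpha}x)=n(\alpha+1)|x|^{n\alpha}$ for the volume term --- reads, with $\nu=x/|x|$,
$$\frac1n\int_{\partial B_R}(x\cdot\nu)|\nabla v|^n-\int_{\partial B_R}|\nabla v|^{n-2}(\partial_\nu v)(x\cdot\nabla v)=\int_{\partial B_R}(x\cdot\nu)|x|^{n\alpha}e^v-n(\alpha+1)\int_{B_R}|x|^{n\alpha}e^v .$$
Letting $R\to\infty$, the asymptotics above make the left-hand side converge to $-(n-1)\omega_n\beta^n$, while the first term on the right converges to $0$ (since $n+n\alpha-\beta<0$); hence $(n-1)\omega_n\beta^n=n(\alpha+1)\int_{\mathbb{R}^n}|x|^{n\alpha}e^v=n^2(\alpha+1)\omega_n\beta^{n-1}$, so $\beta=n^2(\alpha+1)/(n-1)$ and
$$\int_{\mathbb{R}^n}|x|^{n\alpha}e^v=n\omega_n\beta^{n-1}=n\Bigl(\frac{n^{2}}{n-1}\Bigr)^{n-1}(\alpha+1)^{n-1}\omega_n .$$

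The main obstacle is the rigorous justification of these two limiting boundary computations. The Pohozaev identity must be established for the degenerate operator $\Delta_n$ under the $C^{1,\eta}_{loc}$ regularity available off the origin, typically by first integrating over annuli $B_R\setminus B_\rho$ and checking via \eqref{gamma1} that the inner boundary terms vanish as $\rho\to0$; and the passage $R\to\infty$ in both identities relies on the sharp $\log|x|+O(1)$ expansion of $v$ together with the accompanying gradient estimate, which Theorem \ref{thm1} supplies through the Kelvin transform but which mere integrability of $|x|^{n\alpha}e^v$ would not.
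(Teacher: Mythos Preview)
Your proposal is correct and follows essentially the same route as the paper: Theorem~\ref{thm1} applied to the Kelvin transform $\hat v$ gives the logarithmic behaviour of $v$ at infinity with some $\gamma_\infty>n^n|\alpha+1|^{n-2}(\alpha+1)\omega_n$, and then the divergence theorem together with the Pohozaev identity \eqref{339} on annuli (letting the inner radius go to $0$ and the outer to $+\infty$) yield $\int_{\mathbb{R}^n}|x|^{n\alpha}e^v=\gamma_\infty$ and $n(\alpha+1)\gamma_\infty=\frac{n-1}{n}(n\omega_n)^{-\frac{1}{n-1}}\gamma_\infty^{\frac{n}{n-1}}$, which is precisely the content of Proposition~\ref{Poho} with $\gamma=0$ that the paper invokes. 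The only cosmetic difference is that you extract $\alpha>-1$ at the outset from Theorem~\ref{thm1} applied to $v$ at $0$ (equivalently, from the integrability of $|x|^{n\alpha}e^v$ together with the $C^{1,\eta}$ regularity of $v$ at the origin), whereas the paper reads it off at the end from the positivity of the right-hand side of the Pohozaev relation.
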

\noindent Radial solutions $v$ of \eqref{E1sim} can be easily obtained as $v=n \log (\alpha+1)+u (|x|^{\alpha+1})$ in terms of a radial entire solution $u$ to \eqref{E1entire}. Thanks to the classification result in \cite{Esp}, for \eqref{E1sim} we can therefore exhibit the following family of radial solutions:
$$v_\lambda=\log\frac{c_n (\alpha+1)^n \lambda^n}{(1+\lambda^{\frac{n}{n-1}} |x|^{\frac{n(\alpha+1)}{n-1}})^n}, \: c_n=n (\frac{n^{2}}{n-1})^{n-1}.$$

\medskip \noindent Problems with exponential nonlinearities on a bounded domain with given singular sources can exhibit non-compact solution-sequences, whose shape near a blow-up point is asymptotically described by the limiting problem \eqref{E1bisentire}. In the regular case (i.e. in absence of singular sources) a concentration-compactness principle has been established \cite{BrMe} for $n=2$  and \cite{AgPe} for $n \geq 2$. In the non-compact situation the exponential nonlinearity concentrates at the blow-up points as a sum of Dirac measures. Theorem \ref{thm2} gives information on the concentration mass of such Dirac measures at a singular blow-up point, which is expected bo te a super-position of several masses $c_n\omega_n$ carried by multiple sharp collapsing peaks governed by \eqref{E1bisentire}$_{\gamma=0}$ and possibly the mass \eqref{quant} of a sharp peak described by \eqref{E1bisentire}. In the regular case such quantization property on the concentration masses has been proved \cite{LiSh} for $n=2$ and extended \cite{EsMo} to $n \geq 2$ by requiring an additional boundary assumption, while the singular case has been addressed in \cite{BaTa,Tar} for $n=2$. For Theorem \ref{thm3} a similar comment is in order.

\medskip \noindent Let us briefly explain the main ideas behind Theorem \ref{thm1}. We can re-adapt the argument in \cite{Esp} to show that $u \in \displaystyle \bigcap_{1\leq q <n} W^{1,q}_{\hbox{loc}}(\Omega)$ and then $u$ satisfies \eqref{E1bis} for some $\gamma \in \mathbb{R}$. On a radial ball $B \subset \subset \Omega$ decompose $u$ as $u=u_0+h$, where $h$ is given by
$$\Delta_n h= \gamma \delta_0 \hbox{ in } B,\quad h=u \hbox{ on }\partial B$$
and satisfies  \eqref{sing1} thanks to \cite{KiVe,Ser1,Ser2}. The key property stems from a simple observation: 
$|x|^{n\alpha}e^h \in L^1$ near $0$ implies $|x|^{n\alpha}e^h \in L^p$ near $0$ for some $p>1$ whenever $h$ has a logarithmic singularity at $0$. Back to \cite{Ser1,Ser2} thanks to such improved integrability, one aims to show that $u_0 \in L^\infty(B)$ and then $u$ has the same logarithmic behavior \eqref{sing1} as $h$. In order to develop a regularity theory for the solution $u_0$ of
\begin{equation}\label{929}
-\Delta_n (u_0+h)+\Delta_n h=|x|^{n \alpha} e^{u_0+h}  \mbox{ in } B, \: u_0=0 \hbox{ on } \partial B,
\end{equation}
the crucial point is to establish several integral inequalities involving $u_0$ paralleling the estimates available for entropy solutions in \cite{AgPe,BBGGPV} and for $W^{1,n}-$solutions in \cite{Ser1}. To this aim, we make use of the deep uniqueness result \cite{DHM} to show that $u$ can be regarded as a Solution Obtained as Limit of Approximations (the so-called SOLA, see for example \cite{BoGa}).

\medskip \noindent The paper is organized as follows. In Section 2 we develop the above argument to prove Theorem \ref{thm1}. Section 3 is devoted to establish Theorems \ref{thm2}-\ref{thm3} via Pohozaev identities: going back to an idea of Y.Y. Li and N. Wolanski for $n=2$, the Pohozaev identities have revealed to be a fundamental tool to derive information on the mass of a singularity (see for example \cite{BaTa,EsMo,RoWe}). In Section 4 a family of radial solutions $u$ to \eqref{E1bisentire} is constructed.

\vskip.6cm\noindent
{\bf Acknowledgements:} The author would like to thank the referee for a careful reading and for pointing out a mistake in the original argument.

\section{Proof of Theorem \ref{thm1}}
\noindent Assume $B_1(0) \subset \subset \Omega$.
Let us first establish the following property on $u$.
\begin{pro} Let $u$ be a solution of \eqref{E1}. There exists $C>0$ so that
\begin{equation} \label{1340}
u(x) \leq C- n(\alpha+1) \log |x| \qquad \hbox{in } B_1(0) \setminus \{ 0 \}.
\end{equation}
\end{pro}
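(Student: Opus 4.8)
The plan is to deduce \eqref{1340} from a scale-invariant local estimate, exploiting the natural scaling of the $n$-Laplacian. Note first that \eqref{1340} is equivalent to the critical-power bound $|x|^{n}\,|x|^{n\alpha}e^{u(x)}\le e^{C}$ near $0$, i.e. to $|x|^{n}f(x)\le e^{C}$ for the right-hand side $f=|x|^{n\alpha}e^{u}$ of \eqref{E1}. Accordingly, for $r\in(0,\tfrac12)$ I would introduce the rescaled function $U_{r}(y)=u(ry)+n(\alpha+1)\log r$ on the fixed annulus $A=\{\tfrac12\le|y|\le 2\}$; this is legitimate since $ry\in B_{1}(0)\setminus\{0\}\subset\subset\Omega$ for such $y$, and in particular $U_{r}\in C^{1,\eta}(A)$. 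Using $\Delta_{n}[u(r\,\cdot\,)]=r^{n}(\Delta_{n}u)(r\,\cdot\,)$ one checks that $-\Delta_{n}U_{r}=|y|^{n\alpha}e^{U_{r}}$ in $A$, while the change of variables $x=ry$ gives $\int_{A}|y|^{n\alpha}e^{U_{r}}\,dy=\int_{\{r/2\le|x|\le 2r\}}|x|^{n\alpha}e^{u}\,dx\le\int_{B_{2r}(0)}|x|^{n\alpha}e^{u}\,dx=:\omega(r)$, and $\omega(r)\to 0$ as $r\to 0$ since $\int_{\Omega}|x|^{n\alpha}e^{u}<+\infty$. The estimate \eqref{1340} then follows once one proves that $\sup_{\{|y|=1\}}U_{r}\le C$ uniformly for $r$ small: indeed this yields $u(x)+n(\alpha+1)\log|x|\le C$ for $0<|x|<r_{0}$, and $u$ is bounded on the compact annulus $\{r_{0}\le|x|\le1\}$ by the $C^{1,\eta}$-regularity of $u$ away from the origin.

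To obtain this uniform bound I would fix nested annuli $A''\subset\subset A'\subset\subset A$, say $A''=\{\tfrac34\le|y|\le\tfrac54\}$ and $A'=\{\tfrac58\le|y|\le\tfrac{13}{8}\}$, so that $A''$ contains the unit sphere and the weight $|y|^{n\alpha}$ is bounded above and below on $A$. Since $t\le e^{t}$, one first gets $\|U_{r}^{+}\|_{L^{1}(A)}\le\int_{A}e^{U_{r}}\le C\int_{A}|y|^{n\alpha}e^{U_{r}}\le C\,\omega(r)\le C$. The heart of the matter is then a Brezis--Merle type estimate for the $n$-Laplacian, in the spirit of \cite{AgPe}, localized to $A'$: since $\|f_{r}\|_{L^{1}(A)}=\int_{A}|y|^{n\alpha}e^{U_{r}}\le\omega(r)\to 0$ lies below the relevant threshold for $r$ small, and $\|U_{r}^{+}\|_{L^{1}(A)}\le C$, one concludes that $e^{U_{r}}$ is bounded in $L^{2}(A')$, hence $f_{r}=|y|^{n\alpha}e^{U_{r}}$ is bounded in $L^{2}(A')$. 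Finally, since $-\Delta_{n}U_{r}=f_{r}$ with $f_{r}$ bounded in $L^{q}(A')$ for some $q>1$ (for the $n$-Laplacian the Serrin condition $q>n/p$, $p=n$, reads $q>1$), the classical local boundedness estimate for the $n$-Laplacian with $L^{q}$ right-hand side gives $\sup_{A''}U_{r}\le C\big(1+\|U_{r}^{+}\|_{L^{1}(A')}+\|f_{r}\|_{L^{2}(A')}^{1/(n-1)}\big)\le C$ uniformly in $r$, which is precisely the bound on the unit sphere.

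The delicate point is the second step. Unlike for the Laplacian, one cannot split $U_{r}$ into an $n$-harmonic part plus a solution with zero boundary data, so the original Brezis--Merle argument must be replaced by its genuinely nonlinear counterpart (the $W^{1,n}_{0}$-estimate combined with interior bounds for $n$-harmonic comparison functions) and localized to $A'$; the vanishing of $\omega(r)$ is exactly what makes this step available, and it is the only place where the finiteness of $\int_{\Omega}|x|^{n\alpha}e^{u}$ is used. The scaling identity, the elementary $L^{1}$-bound on $U_{r}^{+}$, and the local boundedness estimate with $L^{q}$ right-hand side, $q>1$, are by now standard.
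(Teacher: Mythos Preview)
Your proposal is correct and follows essentially the same strategy as the paper: rescale to a fixed region near $\mathbb{S}^{n-1}$, exploit the smallness of the rescaled mass for small $r$ via the Aguilar--Peral exponential estimate after comparison with an $n$-harmonic function, and conclude by Serrin's local boundedness for right-hand sides in $L^{q}$, $q>1$. The only cosmetic difference is that the paper precomposes with a Kelvin transform, setting $U_{r}(y)=u(ry/|y|^{2})+n(\alpha+1)\log r$ and working on balls $B_{1/2}(x_{0})$, $x_{0}\in\mathbb{S}^{n-1}$, whereas you dilate directly and work on an annulus; in both cases the decomposition $U_{r}=H_{r}+(U_{r}-H_{r})$ with $H_{r}$ the $n$-harmonic extension on small balls (which you correctly flag as the delicate nonlinear step) is precisely what makes the \cite{AgPe} estimate and then \cite{Ser1} applicable.
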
 
\begin{proof} Letting $U_r(y)=\hat u (\frac{y}{r})+n(\alpha+1) \log r=u(\frac{ry}{|y|^2})+n(\alpha+1) \log r$ for $0<r\leq \frac{1}{2} $, we have that $U_r$ solves
\begin{equation} \label{1156}
-\Delta_n U_r=|y|^{-n(\alpha+2)}  e^{U_r} \qquad\hbox{in } \mathbb{R}^n \setminus B_{\frac{1}{2}}(0), \quad
\int_{\mathbb{R}^n \setminus B_{\frac{1}{2}}(0)} |y|^{-n(\alpha+2)}  e^{U_r}= \int_{B_{2r}(0)} |x|^{n\alpha}  e^u
\end{equation}
in view of \eqref{Kelvin}. Given a ball $B_{\frac{1}{2}}(x_0)$ for  $x_0 \in \mathbb{S}^{n-1}$, let us consider the $n-$harmonic function $H_r$ in $B_{\frac{1}{2}}(x_0)$ so that $H_r=U_r$ on $\partial B_{\frac{1}{2}}(x_0)$. By the weak maximum principle we deduce that $H_r \leq U_r$ in $B_{\frac{1}{2}}(x_0)$ and then
\begin{equation} \label{1157}
\int_{B_{\frac{1}{2}}(x_0)} (H_r)_+^n \leq  \int_{B_{\frac{1}{2}}(x_0)} (U_r)_+^n \leq n! \int_{B_{\frac{1}{2}}(x_0)} e^{U_r} \leq C \int_{\mathbb{R}^n \setminus B_{\frac{1}{2}}(0)} |y|^{-n(\alpha+2)}  e^{U_r} \leq C \int_{\Omega} |x|^{n\alpha}  e^u<+\infty
\end{equation}
for all $0<r\leq \frac{1}{2}$. By the estimates in \cite{Ser1} we have that there exists $C>0$ so that 
\begin{equation} \label{1316}
\|H_r\|_{\infty, B_{\frac{1}{4}} (x_0)} \leq C
\end{equation}
for all $0<r\leq \frac{1}{2}$. At the same time, by the exponential estimate in \cite{AgPe} we have that there exist $0<r_0 \leq \frac{1}{2}$ and $C>0$ so that
\begin{equation} \label{1330}
\int_{B_{\frac{1}{2}}(x_0)}  e^{2 |U_r-H_r|} \leq C
\end{equation} 
for all $0<r \leq r_0$ in view of $\displaystyle \lim_{ r \to 0^+} \int_{B_{\frac{1}{2}}(x_0)} |y|^{-n(\alpha+2)}  e^{U_r}= 0$ thanks to \eqref{E1} and \eqref{1156}. Since $|y|^{-n(\alpha+2)}  e^{U_r} \leq   C e^{|U_r-H_r|}$ on  $ B_{\frac{1}{4}} (x_0)$ for all $0<r\leq \frac{1}{2}$ in view of \eqref{1316}, we deduce that $|y|^{-n(\alpha+2)}  e^{U_r}$ and $(U_r)_+^{\frac{n}{2}}$ are uniformly bounded in $L^2( B_{\frac{1}{4}} (x_0))$ for all $0<r \leq r_0$ in view of \eqref{1157} and \eqref{1330}. Thanks again to the estimates in  \cite{Ser1}, we finally deduce that
\begin{equation} \label{1333}
\|U_r^+\|_{\infty, B_{\frac{1}{8}} (x_0)} \leq C
\end{equation}
for all $0<r \leq r_0$. Since $\mathbb{S}^{n-1}$ can be covered by a finite number of balls $B_{\frac{1}{8}} (x_0)$, $x_0 \in \mathbb{S}^n$,  going back to $u$ from \eqref{1333} one deduces that
$$u(x)  \leq C-n(\alpha+1) \log |x|$$
for all $|x|=r \leq r_0$. Since this estimate does hold in $B_1(0) \setminus B_{r_0}(0)$ too, we have established the validity of \eqref{1340}.
\end{proof}

\medskip \noindent From now on, set $B=B_r(0)$ for $0<r\leq 1$. We are now ready to establish the starting point for the argument we will develop in the sequel. There holds
\begin{pro} Let $u$ be a solution of \eqref{E1}. Then
\begin{equation} \label{842}
u \in \bigcap_{1\leq q<n} W^{1,q}(\Omega).
\end{equation}
\end{pro}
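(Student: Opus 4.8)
Since $u\in C^{1,\eta}_{\mathrm{loc}}(\Omega\setminus\{0\})$, away from the origin $u$ is of class $W^{1,\infty}$, so it is enough to show that $u\in W^{1,q}(B_{1/2})$ for every $1\le q<n$. I would proceed by a Boccardo--Gallou\"et type truncation scheme: setting $T_k(u):=\max\{-k,\min\{u,k\}\}$ for $k\ge 1$, the aim is the linear growth estimate
\[
\int_{B_{1/2}}|\nabla T_k(u)|^n\le C(1+k)\qquad(k\ge 1),
\]
with $C$ independent of $k$. From this, the classical argument of Boccardo and Gallou\"et (compare the estimates for entropy solutions in \cite{AgPe,BBGGPV}) yields $u\in W^{1,q}(B_{1/4})$ for every $q<n$, which --- together with the fact that, by \eqref{1340}, $u^+$ is bounded by $C+n|\alpha+1|\,\big|\log|x|\big|\in\bigcap_{q<\infty}L^q(B_1)$ --- gives \eqref{842}. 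The vanishing of the $n$-capacity of $\{0\}$ is used here to ensure that the bounded functions $T_k(u)$, which a priori lie in $W^{1,n}_{\mathrm{loc}}(\Omega\setminus\{0\})$, actually belong to $W^{1,n}_{\mathrm{loc}}(\Omega)$.

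\medskip\noindent To establish the growth estimate I would test \eqref{E1}, valid on $\Omega\setminus\{0\}$, against $T_k(u)\,\xi^n\zeta_\varepsilon$, where $\xi\in C^\infty_0(B_1)$ equals $1$ on $B_{1/2}$ and $\zeta_\varepsilon$ vanishes on $B_\varepsilon$ and equals $1$ outside $B_{2\varepsilon}$. Since $|\nabla u|^{n-2}\langle\nabla u,\nabla T_k(u)\rangle=|\nabla T_k(u)|^n$, the left-hand side equals $\int(\xi\zeta_\varepsilon)^n|\nabla T_k(u)|^n$, which increases monotonically to $\int_{B_1}\xi^n|\nabla T_k(u)|^n\ge\int_{B_{1/2}}|\nabla T_k(u)|^n$ as $\varepsilon\to0$. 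On the right-hand side, $\int|x|^{n\alpha}e^u\,T_k(u)\,\xi^n\zeta_\varepsilon\le k\int_\Omega|x|^{n\alpha}e^u$ because $T_k(u)\le k$; the commutator term carrying $\nabla\xi$ is bounded by a constant independent of $k$ and $\varepsilon$, since $u$ and $\nabla u$ are bounded on $\mathrm{supp}\,\nabla\xi\subset\overline{B_1}\setminus B_{1/2}$; and there remains the commutator term carrying $\nabla\zeta_\varepsilon$.

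\medskip\noindent The main obstacle is precisely this last term $n\int T_k(u)\,\xi^n|\nabla u|^{n-2}\langle\nabla u,\nabla\zeta_\varepsilon\rangle$: the gradient $\nabla u$ is not a priori in any $L^q$ near $0$, and no pointwise lower bound on $u$ is available there --- by Theorem \ref{thm1} a solution may behave like $\gamma(n\omega_n|\gamma|^{n-2})^{-\frac1{n-1}}\log|x|$ with $\gamma$ arbitrarily large --- so the cutoff cannot be removed naively. Using H\"older's inequality and $|\nabla\zeta_\varepsilon|\le C\varepsilon^{-1}$, this term is controlled by $Ck\big(\int_{B_{2\varepsilon}\setminus B_\varepsilon}|\nabla u|^n\big)^{\frac{n-1}{n}}$, so the growth estimate reduces to
\[
\liminf_{\varepsilon\to0}\,\int_{B_{2\varepsilon}\setminus B_\varepsilon}|\nabla u|^n<+\infty .
\]
I would prove this by a rescaling in the spirit of the first Proposition: the functions $v_r(y):=u(ry)+n(\alpha+1)\log r$ solve $-\Delta_n v_r=|y|^{n\alpha}e^{v_r}$ on the fixed annulus $A:=B_2\setminus B_{1/2}$, their positive part is bounded uniformly in $r$ by \eqref{1340}, and they carry total mass $\int_A|y|^{n\alpha}e^{v_r}=\int_{B_{2r}\setminus B_{r/2}}|x|^{n\alpha}e^u\to0$ as $r\to0$. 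Comparing $v_r$ on a ball $B'\subset\subset A$ with its $n$-harmonic replacement $H_r$, the correction $v_r-H_r$ is small in $W^{1,n}(B')$ by the strict monotonicity of the $n$-Laplacian together with the vanishing mass, while $\nabla H_r$ is bounded in $L^n$ on a smaller ball through the estimates of \cite{Ser1}, exactly as in the previous Proposition; scaling invariance of the $n$-Dirichlet energy, $\int_A|\nabla v_r|^n=\int_{B_{2r}\setminus B_{r/2}}|\nabla u|^n$, then delivers the bound. (Equivalently, one can work directly with the integral inequalities available for $L^1$/entropy solutions, which is the route systematically exploited later in the paper.) Once the growth estimate holds, the Boccardo--Gallou\"et argument closes and \eqref{842} follows.
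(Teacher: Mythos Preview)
Your strategy---a direct Boccardo--Gallou\"et truncation with the cutoff $\zeta_\varepsilon$ removed via a rescaling estimate---is natural, but there is a genuine gap at the step where you claim that $\nabla H_r$ is bounded in $L^n$ on a smaller ball ``through the estimates of \cite{Ser1}, exactly as in the previous Proposition.'' The previous Proposition only produces an \emph{upper} bound on the rescaled function (this is precisely \eqref{1340}: $v_r^+\le C$, hence $H_r\le v_r\le C$), and that is all that is used there: the passage \eqref{1316}--\eqref{1333} only requires $H_r$ bounded from above in order to control $e^{H_r}$ and obtain $(U_r)^+\in L^\infty$. A bound on $\|\nabla H_r\|_{L^n}$, by contrast, requires two-sided control of $H_r$ (any Caccioppoli or interior gradient estimate for $n$-harmonic functions involves $\|H_r\|_{L^n}$ or $\mathrm{osc}\,H_r$), and no lower bound on $v_r$ is available a priori. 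In fact, by the very theorem being proved one has $v_r(y)\sim\bigl[\gamma(n\omega_n|\gamma|^{n-2})^{-\frac1{n-1}}+n(\alpha+1)\bigr]\log r\to-\infty$ as $r\to0$ (the bracket is positive by \eqref{459}), so $\|H_r\|_{L^n(B')}$ typically diverges. Your bound on $\|\nabla w_r\|_{L^n}$ is fine, but without control of $\|\nabla H_r\|_{L^n}$ the reduction to $\liminf_{\varepsilon}\int_{B_{2\varepsilon}\setminus B_\varepsilon}|\nabla u|^n<\infty$ is not closed; this quantity is indeed finite a posteriori, but you have not shown it a priori.

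The paper avoids this obstacle by never asking for $L^n$ gradient control near the origin. It compares $u$ with its $n$-harmonic replacement $h_{\epsilon,r}$ on the annulus $A_{\epsilon,r}=B_r\setminus\overline{B_\epsilon}$, where $u$ is regular, and applies the entropy/$W^{1,q}$ estimates of \cite{AgPe,BBGGPV,BoGa} to $u_{\epsilon,r}=u-h_{\epsilon,r}$ to get $\|\nabla u_{\epsilon,r}\|_{L^q(A_{\epsilon,r})}\le C$ uniformly in $\epsilon$ for every $q<n$. Passing to the limit $\epsilon\to0$ gives $u_r\in W^{1,q}_0(B_r)$ and an $n$-harmonic limit $h_r$ in the punctured ball; the latter is then handled by the Harnack inequality together with Kichenassamy--V\'eron \cite{KiVe}, which classifies the possible singular behavior of $h_r$ at $0$ and yields $h_r\in W^{1,q}(B_r)$ for $q<n$ \emph{without} any a priori lower bound. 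Your parenthetical remark that one can ``equivalently'' use the $L^1$/entropy inequalities is thus not an alternative packaging of the same idea but rather the substitute that is actually needed, and it leads back to the paper's decomposition rather than to the direct truncation scheme you outlined.
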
 
\begin{proof} Let us go through the argument in \cite{Esp} to obtain $W^{1,q}-$estimates on $u$. For $0<\epsilon<r<1$ let us introduce $h_{\epsilon,r} \in W^{1,n}(A_{\epsilon,r}) $, $A_{\epsilon,r}:=B \setminus \overline{B_\epsilon(0)}$, as the solution of
$$\Delta_n h_{\epsilon,r}=0 \hbox{ in }A_{\epsilon,r} ,\: h_{\epsilon,r}= u \hbox{ on }\partial A_{\epsilon,r}.$$ Regularity issues for quasi-linear PDEs involving $\Delta_n$ are well established since the works of DiBenedetto, Evans, Lewis, Serrin, Tolksdorf, Uhlenbeck, Uraltseva. For example, by \cite{Dib,Lie,Ser1,Tol} we deduce that $h_{\epsilon,r},\: u_{\epsilon,r}=u-h_{\epsilon,r} \in C^{1,\eta}(\overline{A_{\epsilon,r}})$ and $u_{\epsilon,r}$ satisfies
\begin{equation} \label{907}
-\Delta_n (u_{\epsilon,r}+h_{\epsilon,r})+\Delta_n h_{\epsilon,r}= |x|^{n\alpha}e^ u \hbox{ in }A_{\epsilon,r},\: u_{\epsilon,r}=0 \hbox{ on }\partial A_{\epsilon,r}. 
\end{equation}
By the techniques in \cite{AgPe,BBGGPV,BoGa} we have the following estimates, see Proposition 2.1 in \cite{Esp}:  for all $1\leq q<n$ and all $p\geq 1$ there exist $0<r_0<1$ and $C>0$ so that
\begin{equation} \label{17255}
\int_{A_{\epsilon,r}} |\nabla u_{\epsilon,r}|^q +\int_{A_{\epsilon,r}} e^{p u_{\epsilon,r}} \leq C
\end{equation} 
for all $0<\epsilon<r \leq r_0$ thanks to \eqref{907} and $\displaystyle \lim_{r \to 0^+} \int_{B}  |x|^{n \alpha} e^u =0.$
Since by the Sobolev embedding $W^{1,\frac{n}{2}}_0(B_1(0)) \hookrightarrow L^n(B_1(0))$ there holds $\int_{A_{\epsilon,r}} |u_{\epsilon,r}|^n \leq C$ for all $0<\epsilon<r \leq r_0$  in view of \eqref{17255} with $q=\frac{n}{2}$ and $A_{\epsilon,r} \subset B_1(0)$, we have that
$$\|h_{\epsilon,r} \|_{L^n(A)} \leq C(A) \qquad  \forall \ A \subset \subset \overline{B} \setminus \{0\}\, , \: \forall \ 0<\epsilon<r \leq r_0$$
in view of $u \in C^{1,\eta}_{loc}(B_1(0) \setminus \{0\})$ and then
$$\|h_{\epsilon,r}\|_{C^{1,\eta}(A)} \leq C(A) \qquad \forall \ A \subset \subset \overline{B} \setminus \{0\}\, , \: \forall \ 0<\epsilon<r \leq r_0$$
thanks to \cite{Dib,Lie,Ser1,Tol}. By the Ascoli-Arzel\'a's Theorem and a diagonal process we can find a sequence $\epsilon \to 0$ so that $h_{\epsilon,r} \to h_r$ and $u_{\epsilon,r} \to u_r:=u-h_r$ in $C^1_{\hbox{loc}} (\overline{B}\setminus \{0\})$ as $\epsilon \to 0$, where $h_r \leq u$ is a $n-$harmonic function in $B \setminus \{0\}$ and $u_r$ satisfies
\begin{equation} \label{pro1}
u_r \in W^{1,q}_0(B) ,\qquad e^{u_r} \in L^p(B)
\end{equation}
for all $1\leq q<n$ and all $p\geq 1$ if $r$ is sufficiently small in view of \eqref{17255}. Since 
$$h_r(x) \leq C- n(\alpha+1) \log |x| \qquad \hbox{in }B$$
in view of $h_r \leq u$ and  \eqref{1340}, we have that $H^\lambda(y)=- \frac{h_r(\lambda y)}{\log \lambda}$ is a $n-$harmonic function in $B_{\frac{r}{\lambda}}(0)$ so that $H^\lambda \leq  n(\alpha+1)+1$ in $B_2(0)\setminus B_{\frac{1}{2}}(0)$ for all $0<\lambda\leq \lambda_0$, where $\lambda_0 \in (0,\frac{r}{2}]$ is a suitable small number. By the Harnack inequality in Theorem 7-\cite{Ser1} applied to $n(\alpha+1)+1-H^\lambda \geq 0$ we deduce that 
\begin{equation} \label{Harn}
\max_{|y|=1} H^\lambda \leq C\left[\frac{n|\alpha+1|+1}{C}+\min_{|y|=1} H^\lambda \right]
\end{equation}
for all $0<\lambda\leq \lambda_0$, for a suitable $C\in (0,1)$. There are two possibilities:
\begin{itemize}
\item either $ \displaystyle \min_{|y|=1} H^\lambda \geq -\frac{n|\alpha+1|+1}{C}$ for all $0<\lambda\leq \lambda_1$ and some $\lambda_1\in(0, \lambda_0]$, which implies $\displaystyle \max_{|y|=1} |H^\lambda| \leq \frac{n|\alpha+1|+1}{C}$ for all $0<\lambda\leq \lambda_1$ and in particular
\begin{equation} \label{955}
|h_r| \leq -C_0 \log |x| \qquad \hbox{in }B_{\lambda_1}(0)
\end{equation}
for some $C_0>0$;
\item or  $ \displaystyle \min_{|y|=1} H^{\lambda_n} \leq -\frac{n|\alpha+1|+1}{C}$ for a sequence $\lambda_n \downarrow 0$, which implies $\displaystyle \max_{|y|=1} H^{\lambda_n} \leq 0$ in view of \eqref{Harn} and in turn $h_r \leq 0$ on $|x|=\lambda_n$ for all $n \in \mathbb{N}$, leading to 
\begin{equation} \label{421}
h_r \leq 0\qquad \hbox{in }B_{\lambda_1}(0)
\end{equation}
by the weak maximum principle.
\end{itemize}
Notice that \eqref{421} implies the validity of \eqref{955} for some $C_0>0$ in view of Theorem 12-\cite{Ser1}. Thanks to \eqref{955} one can apply Theorem 1.1-\cite{KiVe} to show that
\begin{equation} \label{pro2}
h_r \in W^{1,q}(B) 
\end{equation}
for all $1\leq q <n$ and there exists $\gamma_r \in \mathbb{R}$ so that
\begin{equation} \label{sing1bis}
h_r-\gamma_r (n\omega_n |\gamma_r|^{n-2})^{-\frac{1}{n-1}} \log |x|\in L^\infty (B), \qquad \Delta_n h_r=\gamma_r \delta_0 \hbox{ in } B.
\end{equation}
In particular, $u \in \displaystyle \bigcap_{1\leq q<n} W^{1,q}(\Omega)$ in view of \eqref{pro1} and \eqref{pro2}, and \eqref{842} is established.
\end{proof} 

\medskip \noindent Even if $\gamma_r>-n^n|\alpha+1|^{n-2}(\alpha+1)\omega_n$, at this stage we cannot exclude that $\displaystyle \lim_{r \to 0}\gamma_r=-n^n|\alpha+1|^{n-2}(\alpha+1)\omega_n$. Therefore, we are not able to use \eqref{pro1} and \eqref{sing1bis} for improving the exponential integrability on $u$ to reach $|x|^{n\alpha} e^u=|x|^{n\alpha} e^{h_r} e^{u_r} \in L^p$ near $0$ for some $p>1$ and $r$ sufficiently small, as it would be necessary to prove $L^\infty-$bounds on $u_r$ via \eqref{907} on $u_{\epsilon,r}$. 

\medskip \noindent We need to argue in a different way. Since $u \in W^{1,n-1}(\Omega)$ in view of \eqref{842}, we can extend \eqref{E1} at $0$ as
\begin{equation} \label{1033}
-\Delta_n u= |x|^{n\alpha} e^u-\gamma \delta_0 \quad \hbox{in }\Omega.
\end{equation}
To see it, let $\varphi \in C_0^1(\Omega)$ and consider a function $\chi_\epsilon \in C^\infty(\Omega)$ with $0\leq \chi_\epsilon \leq 1$, $\chi_\epsilon=0$ in $B_{\frac{\epsilon}{2}}(0)$, $\chi_\epsilon=1$ in $\Omega \setminus B_\epsilon(0)$ and $\epsilon |\nabla \chi_\epsilon| \leq C$. Taking $\chi_\epsilon \varphi \in C_0^1(\Omega \setminus \{0\})$ as a test function in \eqref{E1} we have that
\begin{equation} \label{1022}
\int_\Omega |\nabla u|^{n-2}\langle \nabla u, \varphi \nabla \chi_\epsilon+\chi_\epsilon \nabla \varphi \rangle=\int_\Omega \chi_\epsilon |x|^{n\alpha} e^u  \varphi.
\end{equation}
Since $u \in W^{1,n-1}(\Omega)$ and $|x|^{n\alpha} e^u \in L^1(\Omega)$ it is easily seen that
\begin{equation} \label{1023}
\int_\Omega \chi_\epsilon  |\nabla u|^{n-2}\langle \nabla u, \nabla \varphi \rangle \to \int_\Omega |\nabla u|^{n-2}\langle \nabla u, \nabla \varphi \rangle ,\qquad \int_\Omega \chi_\epsilon |x|^{n\alpha} e^u  \varphi \to \int_\Omega |x|^{n\alpha} e^u \varphi 
\end{equation}
as $\epsilon \to 0$. Since
$$\int_\Omega |\nabla u|^{n-1}|\varphi - \varphi (0)| |\nabla \chi_\epsilon|
 \leq C \int_{B_\epsilon(0) \setminus B_{\frac{\epsilon}{2}}(0)}  |\nabla u|^{n-1} \to 0$$
as $\epsilon \to 0$ in view of $|\varphi - \varphi (0)|\leq C\epsilon$ in $B_\epsilon(0) \setminus B_{\frac{\epsilon}{2}}(0)$ and $u \in W^{1,n-1}(\Omega)$, the remaining term in \eqref{1022} can be re-written as follows:
\begin{equation} \label{1030}
\int_\Omega |\nabla u|^{n-2} \varphi  \langle \nabla u, \nabla \chi_\epsilon \rangle=
\varphi (0) \int_\Omega |\nabla u|^{n-2} \langle \nabla u,\nabla \chi_\epsilon \rangle+o(1)
\end{equation}
as $\epsilon \to 0$. By inserting \eqref{1023}-\eqref{1030} into \eqref{1022} we deduce the existence of
$$\gamma=\lim_{\epsilon \to 0} \int_\Omega |\nabla u|^{n-2} \langle \nabla u,\nabla \chi_\epsilon \rangle$$
and the validity of \eqref{1033} for $u$. Moreover, if we assume $u \in C^1(\overline{ \Omega} \setminus \{0\})$, we can interpret $\gamma$ as
$$\gamma=\lim_{\epsilon \to 0} [\int_\Omega |x|^{n\alpha} e^u \chi_\epsilon+ \int_{\partial \Omega} |\nabla u|^{n-2}\partial_n u]=\int_\Omega |x|^{n\alpha} e^u + \int_{\partial \Omega} |\nabla u|^{n-2}\partial_n u.$$

\medskip \noindent Since $\gamma_r \geq \gamma+o(1)$ as $r \to 0$ according to (4.16)-\cite{Esp}, we find that $h_r$ is possibly much lower than $u$ and then needs to be compensated by an unbounded function $u_r \geq 0$  in order to keep the validity of $u=u_r+h_r$. Instead, thanks to Theorem 2.1-\cite{KiVe} introduce $h \in \displaystyle \bigcap_{1\leq q<n} W^{1,q}(B)$ as the solution of 
$$\Delta_n h= \gamma \delta_0 \hbox{ in } B,\quad h=u \hbox{ on }\partial B$$
so that
\begin{equation} \label{954}
h-\gamma (n\omega_n |\gamma|^{n-2})^{-\frac{1}{n-1}} \log |x|\in L^\infty (B).
\end{equation}
Decomposing $u$ as $u=u_0+h$, the solution $u_0$ of \eqref{929} on $B$ is very likely a bounded function, as we will prove below.

\medskip \noindent In order to establish some crucial integral inequalities involving $u_0$, let us introduce the following approximation scheme. By convolution with mollifiers consider sequences  $f_j, g_j \in C_0^\infty(B)$ so that $f_j \rightharpoonup  |x|^{n \alpha} e^u-\gamma \delta_0$ weakly in the sense of measures and $0\leq f_j-g_j \to |x|^{n \alpha} e^u$ in $L^1(B)$ as $j \to +\infty$. Since $u \in C^{1,\eta}(\partial B)$, let $\varphi \in C^{1,\eta} (B)$ be the $n-$harmonic extension of $u \mid_{\partial B}$ in $B$. Let $v_j,w_j \in W_0^{1,n}(B)$ be the weak solutions of $-\hbox{div}\ {\bf a}(x,\nabla v_j)=f_j$ and $-\hbox{div}\ {\bf a}(x,\nabla w_j)=g_j$ in $B$, where ${\bf a}(x,p)=|p+\nabla \varphi|^{n-2} (p+\nabla \varphi)-|\nabla \varphi|^{n-2} \nabla \varphi$. In this way, $u_j=v_j+\varphi$ and $h_j=w_j+\varphi$ do solve
$$-\Delta_n u_j=f_j \hbox{ and } -\Delta_n h_j=g_j \hbox{ in }B,\: u_j=h_j=u \hbox{ on }\partial B.$$
Since $f_j,g_j$ are uniformly bounded in $L^1(B)$, by $(21)$ in \cite{BoGa} we can assume that $v_j \to v$ and $w_j \to w$ in $W^{1,q}_0(B)$ for all $1\leq q<n$ as $j \to +\infty$, where $v$ and $w$ do satisfy
\begin{equation} \label{636}
-\hbox{div}\ {\bf a}(x,\nabla v)=|x|^{n \alpha} e^u-\gamma \delta_0 \hbox{ and }-\hbox{div}\ {\bf a}(x,\nabla w)=-\gamma \delta_0 \hbox{ in }B
\end{equation}
in view of $g_j \rightharpoonup -\gamma \delta_0$ weakly in the sense of measures as $j \to +\infty$. Since $u-\varphi,h-\varphi \in \displaystyle \bigcap_{1\leq q<n} W^{1,q}_0(B)$ do solve the first and the second equation in \eqref{636}, respectively, by the uniqueness result in \cite{DHM} (see Theorems 1.2 and 4.2 in \cite{DHM}) we have that $v=u-\varphi$ and $w=h-\varphi$, i.e.
$$u_j \to u \hbox{ and } h_j \to h \hbox{ in } W^{1,q}(B) \hbox{ for all }1\leq q<n \hbox{ as }j \to +\infty.$$ Thanks to the approximation given by the $u_j$'s and $h_j$'s, we can now derive some crucial integral inequalities on $u_0$.
\begin{pro} \label{entropy} Let $u_0$ be a solution of \eqref{929}. Then $u_0 \geq 0$ and we have:
\begin{equation} \label{642}
\int_{\{k<|u_0| < k+a\} } |\nabla u_0|^n \leq \frac{a}{d} \int_B |x|^{n\alpha} e^u \qquad \forall \: k,a>0
\end{equation}
and, if $|x|^{n\alpha} e^u  \in L^p(B)$ for some $p>1$,
\begin{equation} \label{901}
\left(\int_B u_0^{2mnq} \right)^{\frac{1}{2m}} \leq \frac{C q^{n-1}}{d}   |B|^{\frac{n-1}{mnq}} \left( \int_B |x|^{n p \alpha}e^{pu} \right)^{\frac{1}{p}} \left(\int_B u_0^{mnq}\right)^{\frac{n(q-1)+1}{mnq}},
\end{equation}
where $m=\frac{p}{p-1}$ and 
\begin{equation} \label{745}
d= \inf_{X \not= Y} \frac{\langle |X|^{n-2}X-|Y|^{n-2}Y,X-Y \rangle}{|X-Y|^n}>0.
\end{equation}
\end{pro}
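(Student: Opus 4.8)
The plan is to carry out every computation on the smooth approximations $u_j=v_j+\varphi$, $h_j=w_j+\varphi$ constructed above and only afterwards let $j\to+\infty$; recall $u_j\to u$ and $h_j\to h$ in $W^{1,q}(B)$ for all $1\le q<n$, hence in $L^\sigma(B)$ for every $\sigma<+\infty$ and, along a subsequence, a.e.\ in $B$. Writing $u_{0,j}:=u_j-h_j=v_j-w_j\in W^{1,n}_0(B)$, subtraction of the two approximating equations gives
$$-\hbox{div}\bigl(|\nabla u_j|^{n-2}\nabla u_j-|\nabla h_j|^{n-2}\nabla h_j\bigr)=f_j-g_j \ \hbox{ in } B, \qquad u_{0,j}=0 \ \hbox{ on } \partial B,$$
and, $f_j,g_j$ being smooth and compactly supported in $B$, standard regularity makes $u_{0,j}$ bounded and $C^{1,\eta}$ up to $\partial B$; hence every truncation of $u_{0,j}$ and -- since $u_{0,j}\ge0$, see below -- every power $u_{0,j}^\beta$ with $\beta\ge1$ is an admissible test function lying in $W^{1,n}_0(B)\cap L^\infty(B)$, which is precisely why one passes through the approximation rather than testing \eqref{929} directly. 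First I would prove $u_0\ge0$: since $f_j-g_j\ge0$ and $v_j=w_j$ on $\partial B$, the weak comparison principle for the strictly monotone operator $-\hbox{div}\,{\bf a}(x,\cdot)$, its strict monotonicity being quantified by $d>0$ in \eqref{745}, forces $v_j\ge w_j$, i.e.\ $u_{0,j}\ge0$, and the a.e.\ limit gives $u_0\ge0$.

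To establish \eqref{642} I would test the equation for $u_{0,j}$ with $\min\{(u_{0,j}-k)_+,a\}$. Since its gradient is ${\bf 1}_{\{k<u_{0,j}<k+a\}}(\nabla u_j-\nabla h_j)$, \eqref{745} bounds the left-hand side below by $d\int_{\{k<u_{0,j}<k+a\}}|\nabla u_{0,j}|^n$, while $0\le\min\{(u_{0,j}-k)_+,a\}\le a$ and $f_j-g_j\ge0$ bound the right-hand side above by $a\int_B(f_j-g_j)$, which tends to $a\int_B|x|^{n\alpha}e^u$. Thus the truncations are bounded in $W^{1,n}_0(B)$; extracting a weakly convergent subsequence, identifying its limit as $\min\{(u_0-k)_+,a\}$ through the a.e.\ convergence of $u_{0,j}$, and using the weak lower semicontinuity of $w\mapsto\int_B|\nabla w|^n$ yields \eqref{642} (here $u_0\ge0$, so $\{k<|u_0|<k+a\}=\{k<u_0<k+a\}$).

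For \eqref{901}, under the extra hypothesis $|x|^{n\alpha}e^u\in L^p(B)$ I would first arrange the mollifications so that also $f_j-g_j\to|x|^{n\alpha}e^u$ in $L^p(B)$. Fixing $q\ge1$ and setting $\beta:=n(q-1)+1$ -- the unique choice making $\frac{n+\beta-1}{n}=q$ -- I would test the equation for $u_{0,j}$ with $u_{0,j}^\beta$, so that \eqref{745} gives
$$\frac{\beta d}{q^n}\int_B|\nabla(u_{0,j}^q)|^n=\beta d\int_B u_{0,j}^{\beta-1}|\nabla u_{0,j}|^n\le\int_B(f_j-g_j)\,u_{0,j}^\beta .$$
I would bound the right-hand side by Hölder with exponents $(p,m)$ and then by Hölder with exponents $\bigl(\frac{nq}{\beta},\frac{nq}{nq-\beta}\bigr)$ -- here $nq-\beta=n-1$ -- obtaining $\bigl(\int_B(f_j-g_j)^p\bigr)^{1/p}|B|^{(n-1)/(mnq)}\bigl(\int_B u_{0,j}^{mnq}\bigr)^{(n(q-1)+1)/(mnq)}$; and I would estimate the left-hand side from below by the Sobolev inequality in $W^{1,n}_0(B)$ applied to $u_{0,j}^q$, which, together with the identity $\bigl(\int_B u_{0,j}^{2mnq}\bigr)^{1/(2m)}=\|u_{0,j}^q\|_{L^{2mn}(B)}^n$, bounds $\bigl(\int_B u_{0,j}^{2mnq}\bigr)^{1/(2m)}$ from above by a constant (comprising a power of $2mn$) times $\int_B|\nabla(u_{0,j}^q)|^n$. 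Combining the three estimates, absorbing $q^n/\beta\le C(n)q^{n-1}$, and letting $j\to+\infty$ term by term -- using the strong $L^\sigma(B)$ convergence of $u_{0,j}$ for every finite $\sigma$ and the $L^p(B)$ convergence of $f_j-g_j$ -- gives \eqref{901}.

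I expect the one genuine difficulty to be the passage to the limit $j\to+\infty$ in \eqref{642}: the set $\{k<u_{0,j}<k+a\}$ and the gradient on it do not converge pointwise, so one cannot pass term by term, and one must instead use that the truncations are uniformly bounded in $W^{1,n}_0(B)$, identify their weak limit as $\min\{(u_0-k)_+,a\}$ via the a.e.\ convergence of $u_{0,j}$, and invoke the weak lower semicontinuity of the $n$-Dirichlet integral; in \eqref{901}, by contrast, one already holds the inequality at level $j$ and may pass to the limit directly. Throughout, the uniqueness result \cite{DHM} is what justifies identifying the limit of the approximation with $u$ and $h$ (rather than with some a priori different SOLA), and thus underpins the whole scheme. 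The remaining point is purely computational bookkeeping: the value $\beta=n(q-1)+1$ is dictated precisely by the twin demands that the gradient term be a multiple of $\int_B|\nabla(u_{0,j}^q)|^n$ and that the second Hölder step reproduce the exponents of $\int_B u_{0,j}^{mnq}$ and of $|B|$ appearing in \eqref{901}.
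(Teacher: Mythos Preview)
Your proposal is correct and follows essentially the same scheme as the paper: work at the level of the approximations $u_{0,j}=v_j-w_j$, exploit the strict monotonicity constant $d$ in \eqref{745}, test with suitable truncations/powers, and pass to the limit. The only technical differences are that the paper passes to the limit in \eqref{642} via Fatou's Lemma (using the a.e.\ convergence of gradients coming from $W^{1,q}$-convergence) rather than via weak lower semicontinuity, and for \eqref{901} the paper tests with the truncated function $T_a\bigl[|v_j-w_j|^{n(q-1)}(v_j-w_j)\bigr]$ and then lets $a\to+\infty$, whereas you use $u_{0,j}^{\beta}$ directly by first invoking $C^{1,\eta}(\overline B)$-regularity of $u_j,h_j$ to guarantee $u_{0,j}\in L^\infty(B)$; both routes are valid and yield the same inequalities.
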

\begin{proof} First use $-(v_j-w_j)_- \in W^{1,n}_0(B)$ as a test function for $-\hbox{div}\ {\bf a}(x,\nabla v_j)+\hbox{div}\ {\bf a}(x,\nabla w_j)$ to get
$$ d \int_{\{ v_j-w_j<0\} } |\nabla (v_j-w_j)|^n  \leq   - \int_B \langle {\bf a}(x,\nabla v_j)-{\bf a}(x,\nabla w_j), \nabla (v_j-w_j)_- \rangle =-\int_B (f_j-g_j) (v_j-w_j)_- \leq 0$$
in view of \eqref{745} and $f_j-g_j\geq 0$. Hence, $v_j- w_j \geq 0$ and then $u_0\geq0 $ in view of $v_j-w_j \to u-h=u_0$ in $W^{1,q}_0(B)$ for all $1\leq q<n$ as $j \to +\infty$. Now, introduce the truncature operator $T_{k,a}$, for $k,a>0$, as
$$T_{k,a}(s)=\left\{ \begin{array}{cl}  s- k \hbox{ sign}(s) &\hbox{if }k< |s|< k+a, \\ a \hbox{ sign}(s)  &\hbox{if }|s|\geq  k+a,\\
 0 &\hbox{if }|s|\leq k,
\end{array} \right. $$
and use $T_{k,a}(v_j-w_j) \in W^{1,n}_0(B)$ as a test function for $-\hbox{div}\ {\bf a}(x,\nabla v_j)+\hbox{div}\ {\bf a}(x,\nabla w_j)$ to get
\begin{equation} \label{743}
 d \int_{\{ k<|v_j-w_j| < k+a\} } |\nabla (v_j-w_j)|^n  \leq   \int_B \langle {\bf a}(x,\nabla v_j)-{\bf a}(x,\nabla w_j), \nabla T_{k,a}(v_j-w_j) \rangle =\int_B (f_j-g_j) T_{k,a}(v_j-w_j)
\end{equation}
in view of \eqref{745}. Since $v_j-w_j \to u_0$ in $W^{1,q}_0(B)$ for all $1\leq q<n$ and $f_j-g_j \to |x|^{n \alpha} e^u$ in $L^1(B)$ as $j \to +\infty$, we can let $j \to +\infty$ in \eqref{743} and get by Fatou's Lemma that
$$ d \int_{\{ k<|u_0| < k+a\} } |\nabla u_0|^n  \leq   \int_B  |x|^{n \alpha} e^u T_{k,a}(u_0) \leq a  \int_B  |x|^{n \alpha} e^u$$ 
yielding the validity of \eqref{642}. Finally, if $|x|^{n\alpha} e^u  \in L^p(B)$ for some $p>1$, we can assume that $f_j-g_j \to |x|^{n \alpha} e^u$ in $L^p(B)$ as $j \to +\infty$ and use $T_a[|v_j-w_j|^{n(q-1)}(v_j-w_j)] \in W^{1,n}_0(B)$, where $T_a=T_{0,a}$ and $a>0,q \geq 1$, as a test function for $-\hbox{div}\ {\bf a}(x,\nabla v_j)+\hbox{div}\ {\bf a}(x,\nabla w_j)$ to get by H\"older's inequality
\begin{eqnarray*}
d \frac{n(q-1)+1}{q^n} \int_{\{ |v_j-w_j|^{n(q-1)+1} < a\} } |\nabla |v_j-w_j|^q|^n  \leq \int_B |f_j-g_j| |v_j-w_j|^{n(q-1)+1}\leq |B|^{\frac{n-1}{mnq}} \|f_j-g_j\|_p  \left(\int_B |v_j-w_j|^{mnq}\right)^{\frac{n(q-1)+1}{mnq}}
\end{eqnarray*}
in view of $|T_a(s)|\leq |s|$ and \eqref{745}. We have used that $v_j-w_j \in W^{1,n}_0(B) \subset  \displaystyle \bigcap_{q \geq 1} L^q(B)$ by the Sobolev embedding Theorem. Letting $a \to +\infty$, by Fatou's Lemma we get that
\begin{eqnarray*}
\int_B |\nabla |v_j-w_j|^q|^n \leq \frac{q^n}{d [n(q-1)+1]}   |B|^{\frac{n-1}{mnq}} \|f_j-g_j\|_p  \left(\int_B |v_j-w_j|^{mnq}\right)^{\frac{n(q-1)+1}{mnq}}.
\end{eqnarray*}
In particular, $|v_j-w_j|^q \in W^{1,n}_0(B)$ and by the Sobolev embedding  $W^{1,n}_0(B) \subset L^{2mn}(B)$ we have that
\begin{eqnarray*}
\left(\int_B |v_j-w_j|^{2mnq} \right)^{\frac{1}{2m}} \leq \frac{C q^n}{d [n(q-1)+1]}   |B|^{\frac{n-1}{mnq}} \|f_j-g_j\|_p  \left(\int_B |v_j-w_j|^{mnq}\right)^{\frac{n(q-1)+1}{mnq}}.
\end{eqnarray*}
Letting $j \to +\infty$, we finally deduce the validity of \eqref{901}:
\begin{eqnarray*}
\left(\int_B u_0^{2mnq} \right)^{\frac{1}{2m}} \leq \frac{C q^{n-1}}{d}   |B|^{\frac{n-1}{mnq}} \left( \int_B |x|^{n p \alpha}e^{pu} \right)^{\frac{1}{p}} \left(\int_B u_0^{mnq}\right)^{\frac{n(q-1)+1}{mnq}}
\end{eqnarray*}
in view of $v_j-w_j \to u_0$ in $W^{1,q}_0(B)$ for all $1\leq q<n$ and $f_j-g_j \to |x|^{n \alpha} e^u$ in $L^p(B)$ as $j \to +\infty$.
\end{proof}

\medskip \noindent We are now ready to complete the proof of Theorem \ref{thm1}.\\

\medskip \noindent \emph{Proof (of Theorem \ref{thm1}).} Since $u_0 \geq 0$ by Proposition \ref{entropy}, we have that $h \leq u$. By \eqref{E1} and \eqref{954} we have that
$$\int_{B}  |x|^{n\alpha+\gamma (n\omega_n |\gamma|^{n-2})^{-\frac{1}{n-1}}} \leq C 
\int_{B}  |x|^{n\alpha}e^h  \leq C \int_\Omega |x|^{n\alpha}e^u<+\infty,$$
which implies 
\begin{equation} \label{459}
n\alpha+\gamma (n\omega_n |\gamma|^{n-2})^{-\frac{1}{n-1}}>-n
\end{equation}
or equivalently
\begin{equation} \label{504}
\gamma>-n^n|\alpha+1|^{n-2}(\alpha+1)\omega_n.
\end{equation}
Since $|x|^{n\alpha}e^h \in L^1$ near $0$ and $h$ has a logarithmic singularity at $0$, then, as already observed in the Introduction, a stronger integrability follows:
\begin{equation} \label{505}
|x|^{n\alpha} e^h \in L^p(B) 
\end{equation}
for some $p>1$. Inequality \eqref{642} is used in \cite{AgPe} to deduce exponential estimates on $u_0$ like
\begin{equation} \label{1011}
\int_B e^{\frac{\delta u_0}{\|f\|_1}} \leq C_r
\end{equation}
for some $\delta>0$ where $f=|x|^{n\alpha} e^u$. Since $\displaystyle \lim_{r \to 0} \int_{B} |x|^{n \alpha} e^u=0$, by \eqref{1011} we deduce that $e^{u_0} \in L^p(B)$ for all $p\geq 1$ if $r$ is sufficiently small and then by \eqref{505}
$$|x|^{n\alpha} e^u=|x|^{n\alpha} e^h e^{u_0} \in L^p(B) $$
for some $p>1$. Inequality \eqref{901} is used in Proposition 4.1-\cite{Esp} (compare with (4.4) in \cite{Esp}) to get $u_0 \in L^\infty(B)$ and then \eqref{954} does hold for $u$ too, yielding the validity of \eqref{sing1}. In order to prove \eqref{gamma1}, set $H=u-\gamma  (n\omega_n |\gamma|^{n-2})^{-\frac{1}{n-1}} \log |x|$ and introduce the function
$$U_r(y)=u(ry)-\gamma  (n\omega_n |\gamma|^{n-2})^{-\frac{1}{n-1}} \log r=
\gamma  (n\omega_n |\gamma|^{n-2})^{-\frac{1}{n-1}} \log |y|+H(ry)$$
for a given sequence $r \to 0$. Since
$$-\Delta_n U_r= r^{n(1+\alpha)} |y|^{n\alpha}  e^{u(ry)}=
r^{n(1+\alpha) +\gamma  (n\omega_n |\gamma|^{n-2})^{-\frac{1}{n-1}}}  |y|^{n\alpha+\gamma  (n\omega_n |\gamma|^{n-2})^{-\frac{1}{n-1}}} e^{H(ry)},$$
by \eqref{sing1} and \eqref{459}-\eqref{504} we have that $U_r$ and $\Delta_n U_r$ are bounded in $L^\infty_{\hbox{loc}}(\mathbb{R}^n \setminus \{0\})$, uniformly in $r$. By \cite{Dib,Ser1,Tol} we deduce that $U_r$ is bounded in $C^{1,\eta}_{\hbox{loc}}(\mathbb{R}^n \setminus \{0\})$, uniformly in $r$. By the Ascoli-Arzel\'a's Theorem and a diagonal process, up to a sub-sequence we have that $U_r \to U_0$ in $C^1_{\hbox{loc}}(\mathbb{R}^n \setminus \{0\})$, where $U_0$ is a n-harmonic function in $\mathbb{R}^n \setminus \{0\}$. Setting $H_r(y)=H(ry)$, we deduce that $H_r \to H_0$ in $C^1_{\hbox{loc}}(\mathbb{R}^n \setminus \{0\})$, where $H_0 \in L^\infty(\mathbb{R}^n)$ in view of \eqref{sing1}.
Since $U_0=\gamma  (n\omega_n |\gamma|^{n-2})^{-\frac{1}{n-1}} \log |y|+H_0$ with $H_0 \in L^\infty (\mathbb{R}^n)\cap C^1(\mathbb{R}^n \setminus \{0\})$, it is well known that $H_0$ is a constant function, as shown in Corollary 2.2-\cite{KiVe} (see also \cite{Esp} for a direct proof). In particular we get that
$$\sup_{|x|=r} |x|  \Big| \nabla [ u-\gamma  (n\omega_n |\gamma|^{n-2})^{-\frac{1}{n-1}} \log |x|]\Big| =\sup_{|y|=1} |\nabla H_r| \to \sup_{|y|=1} |\nabla H_0| =0.$$
Since this is true for any sequence $r \to 0$ up to extracting a sub-sequence, we have established the validity of \eqref{gamma1}. The proof of Theorem \ref{thm1} is concluded.
\begin{flushright}
$\Box$
\end{flushright}

\section{Quantization results}
\noindent In this section we will make crucial use of the following integral identity: for any solution $u$ of \begin{equation}\label{338}
-\Delta_n u=|x|^{n\alpha} e^u \hbox{ in } \mathbb{R}^n \setminus \{0\}
\end{equation}
there holds
\begin{equation}\label{339}
n (\alpha+1) \int_A |x|^{n\alpha} e^u=\int_{\partial A} \left[|x|^{n\alpha} e^u \langle x,\nu\rangle 
+|\nabla u|^{n-2} \partial_{\nu}u \ \langle \nabla u,x \rangle -\frac{|\nabla u|^n}{n}\langle x,\nu\rangle  \right],
\end{equation}
where $A$ is the annulus $A=B_R(0) \setminus B_\epsilon(0)$, $0<\epsilon<R<+\infty$, and $\nu$ is the unit outward normal vector at $\partial A$. Notice that \eqref{339} is simply a special case of the well-known Pohozaev identities associated to \eqref{338}. Even though the classical Pohozaev identities require more regularity than simply $u \in C^{1,\eta}(\mathbb{R}^n \setminus \{0\})$, \eqref{339} is still valid  in the quasilinear case and we refer to \cite{DFSV} for a justification. Thanks to \eqref{339} we are able to show the following general result.
\begin{pro} \label{Poho}
Let $u$ be a solution of \eqref{338} so that \eqref{sing1}-\eqref{gamma1} do hold at $0$ and $\infty$ with $\gamma$ and $-\gamma_\infty$, respectively,  so that  $\gamma> - n^n |\alpha+1|^{n-2}(\alpha+1) \omega_n$ and $\gamma_\infty> n^n |\alpha+1|^{n-2}(\alpha+1) \omega_n$. Then $\int_{\mathbb{R}^n} |x|^{n \alpha} e^u =\gamma +\gamma_\infty$ satisfies
\begin{equation} \label{615}
n (\alpha+1) (\gamma +\gamma_\infty)=
\frac{n-1}{n} (n\omega_n)^{-\frac{1}{n-1}}   \left[|\gamma_\infty|^{\frac{n}{n-1}} 
-|\gamma|^{\frac{n}{n-1}} \right].
\end{equation}
\end{pro}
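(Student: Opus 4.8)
The plan is to feed the Pohozaev identity \eqref{339} into a limiting argument, letting $\epsilon\to 0$ and $R\to\infty$ along suitable sequences, and to evaluate the boundary terms on the inner sphere $\partial B_\epsilon(0)$ and the outer sphere $\partial B_R(0)$ using the precise asymptotics \eqref{sing1}--\eqref{gamma1} at $0$ and at $\infty$. First, since \eqref{sing1}--\eqref{gamma1} hold at $0$ with constant $\gamma$, on $|x|=\epsilon$ we have $u(x)=\gamma(n\omega_n|\gamma|^{n-2})^{-\frac{1}{n-1}}\log|x|+O(1)$ and $|x|\nabla u(x)=\gamma(n\omega_n|\gamma|^{n-2})^{-\frac{1}{n-1}}\frac{x}{|x|}+o(1)$. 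Writing $\beta=\gamma(n\omega_n|\gamma|^{n-2})^{-\frac{1}{n-1}}$, so that $|x|^{n\alpha}e^u$ behaves like $|x|^{n\alpha+\beta}$ near $0$, the constraint $n\alpha+\beta>-n$ from \eqref{459} guarantees $\int_{B_R}|x|^{n\alpha}e^u<\infty$, hence the left-hand side of \eqref{339} converges to $n(\alpha+1)\int_{\mathbb R^n}|x|^{n\alpha}e^u$; similarly the term $\int_{\partial B_\epsilon}|x|^{n\alpha}e^u\langle x,\nu\rangle$ is $O(\epsilon^{n\alpha+\beta+1})\cdot\epsilon^{n-1}=o(1)$ by the same integrability bound, and the outer-sphere counterpart vanishes because at $\infty$ the analogous exponent is $n\alpha-\beta_\infty<-n$.

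The core of the computation is the remaining pair of boundary terms $\int_{\partial B_\epsilon}\bigl[|\nabla u|^{n-2}\partial_\nu u\,\langle\nabla u,x\rangle-\tfrac{1}{n}|\nabla u|^n\langle x,\nu\rangle\bigr]$ and its outer-sphere analogue. On $|x|=\epsilon$ the outward normal (relative to $A$) is $\nu=-x/|x|$, so $\langle x,\nu\rangle=-\epsilon$; using $\nabla u\approx \beta\,x/|x|^2$ we get $\partial_\nu u\approx -\beta/\epsilon$, $\langle\nabla u,x\rangle\approx\beta$, $|\nabla u|\approx|\beta|/\epsilon$, and hence the integrand is $\approx |\beta|^{n-2}(-\beta/\epsilon)\cdot\beta\cdot\epsilon^{n-1}-\tfrac1n|\beta|^n\epsilon^{-n}(-\epsilon)\epsilon^{n-1}=(-|\beta|^{n-2}\beta^2+\tfrac1n|\beta|^n)\epsilon^{n-1}$, wait — I should be careful with signs, but the upshot is that integrating over $\partial B_\epsilon$ (area $n\omega_n\epsilon^{n-1}$) produces a finite limit of the form $-\tfrac{n-1}{n}n\omega_n|\beta|^{n-2}\beta^2\cdot(\text{dimensional factor})$, which after substituting $|\beta|^{n-2}\beta^2=|\beta|^n\,\mathrm{sign}(\gamma)^?$ and $|\gamma|=n\omega_n|\beta|^{n-1}$ simplifies to a multiple of $|\gamma|^{\frac{n}{n-1}}$; explicitly the bookkeeping should yield the inner contribution $-\tfrac{n-1}{n}(n\omega_n)^{-\frac{1}{n-1}}|\gamma|^{\frac{n}{n-1}}$ and the outer contribution $+\tfrac{n-1}{n}(n\omega_n)^{-\frac{1}{n-1}}|\gamma_\infty|^{\frac{n}{n-1}}$, with the opposite orientation of $\nu$ on $\partial B_R$ accounting for the sign flip. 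Passing to the limit in \eqref{339} then gives exactly \eqref{615}.

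I expect the main obstacle to be two-fold: first, justifying that the $O(1)$ and $o(1)$ error terms in \eqref{sing1}--\eqref{gamma1} genuinely produce vanishing contributions to the boundary integrals after multiplication by the sphere's area $n\omega_n\epsilon^{n-1}$ (resp. $n\omega_n R^{n-1}$) — this is where \eqref{gamma1}, giving $o(1)$ control on $|x|\nabla u$ rather than merely $O(1)$, is essential, since the leading terms are themselves $O(\epsilon^{n-1})$ and a mere $O(1)$ gradient error would not be negligible; and second, the somewhat delicate sign and constant bookkeeping in reducing $|\beta|^{n-2}\beta^2$-type expressions to $|\gamma|^{\frac{n}{n-1}}$, keeping track of the fact that $\gamma$ may be negative while $\gamma_\infty>0$. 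One subtlety worth flagging: the limits $\epsilon\to 0$ and $R\to\infty$ may only be available along sequences (as in the proof of \eqref{gamma1}), but since the left side of \eqref{339} and all the genuinely vanishing terms converge unconditionally, and the two surviving boundary terms have limits independent of the chosen sequence, identity \eqref{615} holds in full.
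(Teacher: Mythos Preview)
Your approach is essentially the paper's: pass to the limit in \eqref{339} using the asymptotics \eqref{sing1}--\eqref{gamma1} at $0$ and at $\infty$, with \eqref{gamma1} supplying exactly the $o(1)$ control on $|x|\nabla u$ that makes the boundary integrals converge to $\pm\tfrac{n-1}{n}(n\omega_n)^{-1/(n-1)}|\cdot|^{n/(n-1)}$ (your worry about subsequences is unnecessary, since once \eqref{gamma1} is assumed as a hypothesis the limits are full limits, not subsequential). The one step you leave implicit is the identity $\int_{\mathbb{R}^n}|x|^{n\alpha}e^u=\gamma+\gamma_\infty$, which the paper obtains separately by testing the distributional equation against $\varphi\equiv 1$ (equivalently, integrating $-\Delta_n u=|x|^{n\alpha}e^u$ over the annulus and using the same gradient asymptotics on the flux term).
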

\begin{proof}
By \eqref{sing1}-\eqref{gamma1} at $0$ with $\gamma> - n^n |\alpha+1|^{n-2}(\alpha+1) \omega_n $ we deduce that
\begin{equation} \label{1931}
 |\nabla u|=\frac{1}{|x|} \left[(\frac{|\gamma|}{n\omega_n})^{\frac{1}{n-1}} + o(1)\right],\quad
\langle \nabla u, x \rangle=\gamma  (n\omega_n |\gamma|^{n-2})^{-\frac{1}{n-1}}+o(1),\quad
|x|^{n\alpha}e^u=o(\frac{1}{|x|^n})
\end{equation}
as $x \to 0$ thanks to the equivalence between \eqref{459} and \eqref{504}. By \eqref{1931} we have that
\begin{equation} \label{2115}
\int_{\partial B_\epsilon(0)} |x| \left[|x|^{n\alpha} e^u 
+|\nabla u|^{n-2} \langle \nabla u, \frac{x}{|x|} \rangle^2-\frac{|\nabla u|^n}{n}  \right]\to 
 \frac{n-1}{n} (n\omega_n)^{-\frac{1}{n-1}} |\gamma|^{\frac{n}{n-1}} 
\end{equation}
as $\epsilon \to 0^+$ in view of $\hbox{Area}(\mathbb{S}^{n-1})= n\omega_n$. Similarly, by \eqref{sing1}-\eqref{gamma1} at $\infty$ with $-\gamma_\infty$ so that $\gamma_\infty> n^n |\alpha+1|^{n-2}(\alpha+1) \omega_n$ we deduce that
$$ |\nabla u|=\frac{1}{|x|} \left[(\frac{|\gamma_\infty|}{n\omega_n})^{\frac{1}{n-1}} + o(1)\right],\quad
\langle \nabla u, x \rangle=-\gamma_\infty  (n\omega_n |\gamma_\infty|^{n-2})^{-\frac{1}{n-1}}+o(1),\quad
|x|^{n\alpha}e^u=o(\frac{1}{|x|^n})$$
as $|x| \to \infty$ and then
\begin{equation} \label{522}
\int_{\partial B_R(0)} |x| \left[|x|^{n\alpha} e^u 
+|\nabla u|^{n-2} \langle \nabla u,\frac{x}{|x|} \rangle^2-\frac{|\nabla u|^n}{n}  \right]\to 
\frac{n-1}{n}(n \omega_n)^{-\frac{1}{n-1}}   |\gamma_\infty|^{\frac{n}{n-1}} 
\end{equation}
as $R \to +\infty$. In view of \eqref{gamma1} at $0$ and $\infty$ we easily get that
$$-\Delta_n u=|x|^{n \alpha} e^u -\gamma \delta_0 -\gamma_\infty \delta_{\infty} \hbox{ in } \mathbb{R}^n$$
in the sense
$$\int_{\mathbb{R}^n} |\nabla u|^{n-2} \langle \nabla u,\nabla \varphi \rangle=\int_{\mathbb{R}^n} |x|^{n \alpha} e^u \varphi -\gamma \varphi(0) -\gamma_\infty \varphi(\infty)$$
for all $\varphi \in C^1(\mathbb{R}^n)$ so that $\varphi(\infty):=\displaystyle \lim_{|x| \to \infty} \varphi(x)$ does exist. Choosing $\varphi=1$ we deduce that
\begin{equation} \label{556}
\int_{\mathbb{R}^n} |x|^{n \alpha} e^u =\gamma +\gamma_\infty.
\end{equation}
By inserting \eqref{2115}-\eqref{556} into \eqref{339} and letting $\epsilon \to 0^+,\: R\to +\infty$ we deduce the validity of \eqref{615}. 

\end{proof}
\noindent Let us now apply Proposition \ref{Poho} to problems \eqref{E1bisentire} and \eqref{E1sim}.

\medskip \noindent \emph{Proof (of Theorem \ref{thm2}).} Let $u$ be a solution of \eqref{E1bisentire}. By Theorem \ref{thm1} \eqref{sing1}-\eqref{gamma1} do hold for $u$ at $0$ with $\gamma> - n^n \omega_n$. By \eqref{Kelvin} the Kelvin transform $\hat u$ satisfies
$$-\Delta_n \hat u=|x|^{-2n} e^{\hat u} \hbox{ in } \mathbb{R}^n \setminus \{0\}.$$
Let us apply Theorem \ref{thm1} to deduce the validity of \eqref{sing1}-\eqref{gamma1} for $\hat u$ at $0$ with $\gamma_\infty>  n^n \omega_n $. Back to $u$, \eqref{sing1}-\eqref{gamma1} do hold for $u$ at $\infty$ with $-\gamma_\infty$ so that $\gamma_\infty> n^n \omega_n$. Let us apply Proposition \ref{Poho} with $\alpha=0$ to get $\int_{\mathbb{R}^n} e^u =\gamma +\gamma_\infty$ with $\gamma_\infty$ satisfying \eqref{921}.
Notice that the function $f(s)=n s+\frac{n-1}{n} (n\omega_n)^{-\frac{1}{n-1}}  |s|^{\frac{n}{n-1}}$ is increasing in $(-n^n \omega_n,+\infty)$ and then $f(s) >f(-n^n \omega_n)=-n^n \omega_n$ for all $s \in (-n^n \omega_n,+\infty)$. At the same time the function $g(s)=\frac{n-1}{n} (n\omega_n)^{-\frac{1}{n-1}}   s^{\frac{n}{n-1}} -n  s$ is increasing in $(n^n \omega_n,+\infty)$ and then $g(s)>g(n^n \omega_n)=-n^n \omega_n$ for all $s \in (n^n\omega_n,+\infty)$. Therefore, for any $\gamma>- n^n \omega_n$ equation \eqref{921} has a unique solution $\gamma_\infty>n^n \omega_n$. The proof of Theorem \ref{thm2} is concluded.
\begin{flushright}
$\Box$
\end{flushright}
\begin{oss}
Concerning Corollary \ref{cor1}, observe that in the argument above we have established \eqref{1455} for problem \eqref{E1entire} on $\Omega=\mathbb{R}^n$ and a similar proof is in order for a general unbounded open set $\Omega$. Since $\gamma=0$, we deduce the validity of \eqref{145} in view of \eqref{quant}.
\end{oss}

\medskip \noindent \emph{Proof (of Theorem \ref{thm3}).}  Let $v$ be a solution of \eqref{E1sim}. Applying Theorem \ref{thm1} to the Kelvin transform $\hat v$, solution of
$$-\Delta_n \hat v=|x|^{-n(\alpha+2)} e^{\hat v} \hbox{ in } \mathbb{R}^n \setminus \{0\},$$
we deduce the validity of \eqref{sing1}-\eqref{gamma1} for $v$ at $\infty$ with $-\gamma_\infty$ so that
$\gamma_\infty> n^n |\alpha+1|^{n-2}(\alpha+1) \omega_n$. By Proposition \ref{Poho} with $\gamma=0$ we deduce that
$\gamma_\infty=\int_{\mathbb{R}^n} |x|^{n\alpha} e^v$ satisfies
$$n (\alpha+1) \gamma_\infty=
\frac{n-1}{n} (n\omega_n)^{-\frac{1}{n-1}}   \gamma_\infty^{\frac{n}{n-1}} .$$
Therefore, $\alpha> -1$ and
$$\int_{\mathbb{R}^n} |x|^{n \alpha} e^v=n(\frac{n^2}{n-1})^{n-1} (\alpha+1)^{n-1} \omega_n  ,$$
concluding the proof of Theorem \ref{thm3}.
\begin{flushright}
$\Box$
\end{flushright}

\section{Radial solutions for \eqref{E1bisentire}}
\noindent Fix $M>1$ and assume that
\begin{equation} \label{uniform}
\frac{1}{M}\leq r_0  \leq M, \quad \alpha_0 \leq M, \quad \frac{1}{M}\leq |\alpha_1| \leq M.
\end{equation}
Let us first discuss the local existence theory for the following Cauchy problem: 
\begin{equation} \label{Cauchy}
\left\{ \begin{array}{l}
-\frac{1}{r^{n-1}}(r^{n-1} |U'|^{n-2} U')'=e^U \\
\: U(r_0)=\alpha_0,\:\:\: U'(r_0)=\alpha_1. \end{array} \right.
\end{equation}
Given $0<\delta<\frac{1}{2M}$, define $I=[r_0-\delta,r_0+\delta]$ and $E=\{U \in C(I,[\alpha_0-1,\alpha_0+1]):\ U(r_0)=\alpha_0 \}$, which is a Banach space endowed with $\|\cdot\|_\infty$ as a norm. We can re-formulate \eqref{Cauchy} as $U=TU$, where
$$TU(r)=\alpha_0+\int_{r_0}^r \frac{ds}{s} \Big| r_0^{n-1}|\alpha_1|^{n-2}\alpha_1  -\int_{r_0}^s t^{n-1} e^{U(t)} dt \Big|^{-\frac{n-2}{n-1}} \left(r_0^{n-1} |\alpha_1|^{n-2}\alpha_1-\int_{r_0}^s t^{n-1} e^{U(t)} dt\right).$$
In view of
\begin{equation}\label{858}
|s^n-r_0^n| \leq n (M+1)^{n-1} \delta \qquad \forall\: s \in I
\end{equation}
we have that $\displaystyle \max_I U \leq M+1$ and $\displaystyle \max_I |\int_{r_0}^s t^{n-1} e^{U(t)} dt| \leq e^{M+1} (M+1)^{n-1} \delta$ for all $U \in E$, and then for $0<\delta <  \frac{e^{-M-1}}{2 (M+1)^{3n-3}}$ we have that
\begin{equation} \label{938}
r_0^{n-1} |\alpha_1|^{n-2}\alpha_1-\int_{r_0}^s t^{n-1} e^{U(t)} dt \hbox{ has the same sign as } \alpha_1 \:\: \forall s \in I
\end{equation}
and
\begin{equation} \label{939}
\frac{1}{2 M^{2n-2}} \leq \frac{1}{2} r_0^{n-1} |\alpha_1|^{n-1} \leq |r_0^{n-1} |\alpha_1|^{n-2}\alpha_1-\int_{r_0}^s t^{n-1} e^{U(t)} dt| \leq \frac{3}{2} r_0^{n-1} |\alpha_1|^{n-1} 
\leq \frac{3}{2} M^{2n-2}
\end{equation}
for all $U \in E$. Since $\log \frac{r_0+\delta}{r_0} \leq \log \frac{r_0}{r_0-\delta} \leq \frac{\delta}{r_0-\delta} \leq 2 M \delta$ in view of $\delta<\frac{r_0}{2}$ and 
$$||x|^{-\frac{n-2}{n-1}}x-|y|^{-\frac{n-2}{n-1}}y| \leq C_M |x-y| \quad \forall \: x,y \in \mathbb{R}: \: xy\geq 0, \: \min \{|x|,|y|\}\geq \frac{1}{2 M^{2n-2}}$$
(for example, take $C_M=(1+\frac{n-2}{n-1})(4M^{2n-2})^{\frac{n-2}{n-1}}$), by \eqref{938}-\eqref{939} we have that
$$\|TU-\alpha_0 \|_{\infty,I} \leq  \sup_{r \in I} |\int_{r_0}^r \frac{ds}{s} \Big| r_0^{n-1}|\alpha_1|^{n-2}\alpha_1  -\int_{r_0}^s t^{n-1} e^{U(t)} dt \Big|^{\frac{1}{n-1}}| \leq 2 (\frac{3}{2})^{\frac{1}{n-1}} M^3 \delta \leq 3 M^3 \delta$$
and
$$\|TU-TV\|_{\infty,I}\leq C_M \sup_{r \in I} |\int_{r_0}^r \frac{ds}{s} |\int_{r_0}^s t^{n-1} [e^{U(t)}-e^{V(t)}] dt| |\leq 2 C_M (M+1)^n e^{M+1} \delta \|U-V\|_{\infty,I} $$
for all $U,V \in C^1 (I)$ in view of $\delta <1$ and \eqref{858}. In conclusion, if 
\begin{equation} \label{1251}
0<\delta<\min \{\frac{1}{3M^3}, \frac{e^{-M-1}}{2 (M+1)^{3n-3}},  \frac{e^{-M-1}}{2C_M (M+1)^n}  \},
\end{equation}
then $T$ is a contraction map from $E$ into itself and a unique fixed point $U \in E$ of $T$ is found by the Contraction Mapping Theorem, providing a solution $U$ of \eqref{Cauchy} in $I=[r_0-\delta,r_0+\delta]$.

\medskip \noindent Once a local existence result has been established for \eqref{Cauchy}, we can turn the attention to global issues. Given $r_0>0$, $\alpha_0$ and $\alpha_1 \not=0$, let $I=(r_1,r_2)$, $0\leq r_1<r_0<r_2\leq +\infty $, be the maximal interval of existence for the solution $U$ of \eqref{Cauchy}. We claim that $r_1=0$ when $\alpha_1>0$ and $r_2=+\infty$ when $\alpha_1<0$.

\medskip \noindent Consider first the case $\alpha_1>0$ and assume by contradiction $r_1>0$. Since 
\begin{equation} \label{1206}
U'(r) =\frac{1}{r} \left(r_0^{n-1}\alpha_1^{n-1}+\int_r^{r_0} t^{n-1} e^{U(t)} dt \right)^{\frac{1}{n-1}} \geq \frac{r_0 \alpha_1}{r}>0
\end{equation}
for all $r \in (r_1,r_0]$, one would have that 
$$U(r) \leq \alpha_0,\quad \alpha_1 \leq U'(r) \leq 
\frac{1}{r_1} [r_0^{n-1} \alpha_1^{n-1}+\frac{r_0^n}{n} e^{\alpha_0}]^{\frac{1}{n-1}}$$
for all $r\in (r_1,r_0]$ and then \eqref{uniform} would hold for initial conditions $\alpha_0'=U(r_0')$, $\alpha_1'=U'(r_0')$ in \eqref{Cauchy} at $r_0'$ approaching $r_1$ from the right. Since this would allow to continue the solution $U$ on the left of $r_1$ in view of the estimate \eqref{1251} on the time for local existence, we would reach a contradiction and then the property $r_1=0$ has been established.

\medskip \noindent In the case $\alpha_1<0$ assume by contradiction $r_2<+\infty$. Since
\begin{equation} \label{1207}
U'(r) =- \frac{1}{r} \left(r_0^{n-1}|\alpha_1|^{n-1}+\int_{r_0}^r t^{n-1} e^{U(t)} dt \right)^{\frac{1}{n-1}} \leq - \frac{r_0 |\alpha_1|}{r}<0
\end{equation}
for all $r \in [r_0,r_2)$, one would have that 
$$U(r) \leq \alpha_0,\quad -\frac{1}{r_0} [r_0^{n-1} |\alpha_1|^{n-1}+\frac{r_2^n}{n} e^{\alpha_0}]^{\frac{1}{n-1}} \leq U'(r) \leq - \frac{r_0 |\alpha_1|}{r_2} 
$$
for all $r\in [r_0,r_2)$ and then \eqref{uniform} would hold for initial conditions $\alpha_0'=U(r_0')$, $\alpha_1'=U'(r_0')$ in \eqref{Cauchy} at $r_0'$ approaching $r_2$ from the left. Since one could continue the solution $U$ past $r_2$ thanks to \eqref{1251}, a contradiction would arise. Then, we have shown that $r_2=+\infty$.

\medskip \noindent Given $\epsilon>0$, let now $U_\epsilon^\pm$ be the maximal solution of
$$\left\{ \begin{array}{l}
-\frac{1}{r^{n-1}}(r^{n-1} |U'|^{n-2} U')'=e^U \\
\: U(1)=\alpha_0,\:\:\: U'(1)=\pm \epsilon.\end{array} \right. $$
By the discussion above we have that $U_\epsilon^+$ and $U_\epsilon^-$ are well defined in $(0,1]$ and $[1,+\infty)$, respectively. According to \eqref{1206}-\eqref{1207} one has
\begin{equation} \label{405}
(U_\epsilon^+)' =\frac{1}{r} \left(\epsilon^{n-1}+\int_r^1 t^{n-1} e^{U_\epsilon^+(t)} dt \right)^{\frac{1}{n-1}} \hbox{ in }(0,1],\quad (U_\epsilon^-)'=- \frac{1}{r} \left(\epsilon^{n-1}+\int_1^r t^{n-1} e^{U_\epsilon^-(t)} dt \right)^{\frac{1}{n-1}}  \hbox{ in }[1,+\infty)
\end{equation}
and then $U_\epsilon^+$, $U_\epsilon^-$ are uniformly bounded in $C^{1,\gamma}_{loc} (0,1]$, $C^{1,\gamma}_{loc} [1,+\infty)$, respectively, in view of $U_\epsilon^+,U_\epsilon^- \leq \alpha_0$. Up to a subsequence and a diagonal argument, we can assume that $U_\epsilon^+ \to U^+$ in $C^1_{loc} (0,1]$ and $U_\epsilon^- \to U^-$ in $C^1_{loc} [1,+\infty)$ as $\epsilon \to 0^+$, where 
\begin{equation} \label{1208}
(U^+)'=\frac{1}{r} \left(\int_r^1 t^{n-1} e^{U^+(t)} dt \right)^{\frac{1}{n-1}} \hbox{ in }(0,1],\quad
(U^-)'=- \frac{1}{r} \left(\int_1^r t^{n-1} e^{U_-(t)} dt \right)^{\frac{1}{n-1}}  \hbox{ in }[1,+\infty)
\end{equation}
thanks to \eqref{405}. Since  $U^+(1)=U^-(1)=\alpha_0$ and $(U^+)'(1)=(U^-)'(1)=0$ in view of \eqref{1208}, we have that
$$U=\left\{ \begin{array}{ll} U^+ & \hbox{in }(0,1]\\ U^- & \hbox{in }[1,+\infty)\end{array}\right.$$ 
is in $C^1(0,+\infty)$ with $U\leq U(1)=\alpha_0$, $U'(1)=0$ and
\begin{equation} \label{433}
U'(r) =\frac{1}{r} \Big| \int_r^1 t^{n-1} e^{U(t)} dt \Big|^{-\frac{n-2}{n-1}} \int_r^1 t^{n-1} e^{U(t)} dt  \hbox{ in }(0,+\infty).
\end{equation}
It is not difficult to check that $U$ satisfies $-\Delta_n U=e^U$ in $\mathbb{R}^n \setminus \{0 \}$ and
\begin{equation} \label{singularity}
\lim_{r \to 0} \frac{U(r)}{\log r}=\lim_{ r \to 0} rU'(r)=( \int_0^1 t^{n-1} e^{U(t)} dt )^{\frac{1}{n-1}}=
\left( \frac{1}{n \omega_n} \int_{B_1(0)} e^U  \right)^{\frac{1}{n-1}}
\end{equation}
in view of \eqref{433}. By Theorem \ref{thm1} and \eqref{singularity}  we deduce that $U$ is a radial solution of
$$-\Delta_n U=e^U -\gamma \delta_0 \hbox{ in } \mathbb{R}^n ,\quad U \leq U(1)=\alpha_0,$$
with $\gamma=\int_{B_1(0)} e^{U} $ depending on the choice of $\alpha_0$. By the Pohozaev identity \eqref{339} on $A=B_1(0) \setminus B_\epsilon(0)$, $\epsilon \in (0,1)$, we have that
$$\omega_n[e^{\alpha_0}- \epsilon^n e^{U(\epsilon)}]=\int_{B_1(0)\setminus B_\epsilon(0)} e^U +\frac{n-1}{n} \omega_n [\epsilon U'(\epsilon)]^n  $$
in view of $U(1)=\alpha_0$ and $U'(1)=0$, and letting $\epsilon \to 0^+$ one deduces that
$$\omega_n e^{\alpha_0}= \gamma+\frac{n-1}{n} \omega_n \left( \frac{\gamma}{n \omega_n}  \right)^{\frac{n}{n-1}} $$
in view of \eqref{singularity}. Since $\gamma \in (0,+\infty) \to \gamma+\frac{n-1}{n} \omega_n \left( \frac{\gamma}{n \omega_n}  \right)^{\frac{n}{n-1}} \in (0,+\infty)$ is a bijection, for any given $\gamma>0$ let $\alpha_0=  \log[\frac{\gamma}{\omega_n}+\frac{n-1}{n} ( \frac{\gamma}{n \omega_n})^{\frac{n}{n-1}}]$ and the corresponding $U$ is the solution of \eqref{E1bisentire} we were searching for. Notice that $\int_{\mathbb{R}^n} e^U<+\infty$ in view of $\int_1^\infty t^{n-1} e^{U(t)} dt<+\infty$, as it can be deduced by
$$\lim_{r \to +\infty} \frac{U(r)}{\log r}=\lim_{ r \to +\infty} rU'(r)=-\left( \int_1^\infty t^{n-1} e^{U(t)} dt \right)^{\frac{1}{n-1}}$$
due to \eqref{433}. We have established the following result:
\begin{thm} \label{radialsol} For any $\gamma>0$ there exists a $1-$parameter family of distinct solutions $U_\lambda$, $\lambda>0$, to \eqref{E1bisentire} given by $U_\lambda(x)=U(\lambda x)+n \log \lambda$ such that $U_\lambda$ takes its unique absolute maximum point at $\frac{1}{\lambda}$.
\end{thm}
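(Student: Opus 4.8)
The plan is to produce the family $\{U_\lambda\}_{\lambda>0}$ from the scaling invariance of the equation $-\Delta_n u=e^u$ applied to the radial solution $U$ constructed in the discussion above, and then to read off both the location of the maximum and the pairwise distinctness directly from the monotonicity of the profile $U(r)$.

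First I would collect what is already known about $U$: it belongs to $C^1(0,+\infty)$, is radial, solves $-\Delta_n U=e^U-\gamma\delta_0$ in $\mathbb{R}^n$ with $\gamma=\int_{B_1(0)}e^U>0$, has $\int_{\mathbb{R}^n}e^U<+\infty$, and by \eqref{1208} is strictly increasing on $(0,1]$ and strictly decreasing on $[1,+\infty)$, so that $U\le U(1)=\alpha_0$ with equality only at $r=1$. For $\lambda>0$ set $U_\lambda(x)=U(\lambda x)+n\log\lambda$. Since $\Delta_n(U(\lambda x))=\lambda^n(\Delta_n U)(\lambda x)$, one gets $-\Delta_n U_\lambda=e^{U_\lambda}$ in $\mathbb{R}^n\setminus\{0\}$, while the change of variables $y=\lambda x$ gives $\int_{\mathbb{R}^n}e^{U_\lambda}=\int_{\mathbb{R}^n}e^{U}<+\infty$.

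Next I would check that the singular mass at the origin is left unchanged by the rescaling, i.e. that $U_\lambda$ solves \eqref{E1bisentire} with the same $\gamma$. The quickest way is to test the weak formulation $\int_{\mathbb{R}^n}|\nabla U|^{n-2}\langle\nabla U,\nabla\psi\rangle=\int_{\mathbb{R}^n}e^U\psi-\gamma\psi(0)$ with $\psi(y)=\varphi(y/\lambda)$, for arbitrary $\varphi\in C_0^\infty(\mathbb{R}^n)$, and then substitute $y=\lambda x$: the left-hand side becomes exactly $\int_{\mathbb{R}^n}|\nabla U_\lambda|^{n-2}\langle\nabla U_\lambda,\nabla\varphi\rangle$ and the right-hand side becomes $\int_{\mathbb{R}^n}e^{U_\lambda}\varphi-\gamma\varphi(0)$, so that $-\Delta_n U_\lambda=e^{U_\lambda}-\gamma\delta_0$ in $\mathbb{R}^n$. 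Equivalently, since $|x|\nabla U_\lambda(x)=|\lambda x|(\nabla U)(\lambda x)\to(\gamma/(n\omega_n))^{\frac{1}{n-1}}\frac{x}{|x|}$ as $x\to0$, the mass read off through \eqref{gamma1} in Theorem \ref{thm1} is $\gamma$. Thus every $U_\lambda$ is a radial solution of \eqref{E1bisentire}.

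Finally I would deduce the qualitative claims. As a function of $|x|=r$ the solution $U_\lambda$ depends on $U(\lambda r)$, which by the monotonicity recalled above is strictly increasing for $0<\lambda r\le1$ and strictly decreasing for $\lambda r\ge1$; hence $U_\lambda$ attains its absolute maximum precisely at $r=\frac{1}{\lambda}$, with $\max U_\lambda=\alpha_0+n\log\lambda$. Since $\lambda\mapsto\alpha_0+n\log\lambda$ is strictly monotone, the maps $U_\lambda$ are pairwise distinct, which yields the asserted $1$-parameter family. There is no genuinely hard step here: the only point demanding a short computation is the invariance of the singular mass $\gamma$ under the dilation, treated in the previous paragraph.
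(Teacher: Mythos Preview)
Your proposal is correct and follows the same line as the paper: Section~4 constructs the profile $U$ and then simply records the theorem as a summary (``We have established the following result''), leaving the scaling verification implicit. You make explicit precisely those implicit steps --- the dilation invariance of $-\Delta_n u=e^u$, the preservation of the Dirac mass $\gamma$, and the location and uniqueness of the maximum via the monotonicity in \eqref{1208}/\eqref{433} --- so there is no substantive difference in approach.
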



\bibliographystyle{plain}

\end{document}